\titleformat{\subsection}{\it}{\thesubsection.\enspace}{1pt}{}
\newtheorem{theo}{Theorem}[section]
\newtheorem{lemm}[theo]{Lemma}
\newtheorem{coro}[theo]{Corollary}
\newtheorem{prop}[theo]{Proposition}
\newtheorem{rema}[theo]{Remark}
\numberwithin{equation}{section}
\newcommand\ep{{\varepsilon}} 
\begin{document}
\title{Large time behavior to the FENE dumbbell model of polymeric flows near equilibrium
\hspace{-4mm}
}

\author{ Zhaonan $\mbox{Luo}^1$ \footnote{email: 1411919168@qq.com},\quad
Wei $\mbox{Luo}^1$\footnote{E-mail:  luowei23@mail2.sysu.edu.cn} \quad and\quad
 Zhaoyang $\mbox{Yin}^{1,2}$\footnote{E-mail: mcsyzy@mail.sysu.edu.cn}\\
 $^1\mbox{Department}$ of Mathematics,
Sun Yat-sen University, Guangzhou 510275, China\\
$^2\mbox{Faculty}$ of Information Technology,\\ Macau University of Science and Technology, Macau, China}

\date{}
\maketitle
\hrule

\begin{abstract}
In this paper we mainly study large time behavior for the strong solutions of the finite extensible nonlinear elastic (FENE) dumbbell model. There is a lot results about the $L^2$ decay rate of the co-rotation model. In this paper, we consider the general case. We prove that the optimal $L^2$ decay rate of the velocity is $(1+t)^{-\frac{d}{4}}$ with $d\geq 2$. This result improves the previous result in \cite{Luo-Yin}.\\

\vspace*{5pt}
\noindent {\it 2010 Mathematics Subject Classification}: 35Q30, 76B03, 76D05, 76D99.

\vspace*{5pt}
\noindent{\it Keywords}: The FENE dumbbell model; $L^2$ decay rate; Fourier transform.
\end{abstract}

\vspace*{10pt}

\tableofcontents

\section{Introduction}
In this paper we investigate the large time behavior for the finite extensible nonlinear elastic (FENE) dumbbell model \cite{Bird1977,Doi1988}:
\begin{align}\label{eq0}
\left\{
\begin{array}{ll}
u_t+(u\cdot\nabla)u-\Delta u+\nabla{P}=div~\tau,  ~~~~~~~div~u=0,\\[1ex]
\psi_t+(u\cdot\nabla)\psi=div_{R}[-\sigma(u)\cdot{R}\psi+\beta\nabla_{R}\psi+\nabla_{R}\mathcal{U}\psi],  \\[1ex]
\tau_{ij}=\int_{B}(R_{i}\nabla_{R_j}\mathcal{U})\psi dR, \\[1ex]
u|_{t=0}=u_0,~ \psi|_{t=0}=\psi_0, \\[1ex]
(\beta\nabla_{R}\psi+\nabla_{R}\mathcal{U}\psi)\cdot{n}=0 ~~~~ \text{on} ~~~~ \partial B(0,R_0) .\\[1ex]
\end{array}
\right.
\end{align}
In \ref{eq0}~~$u(t,x)$ stands for the velocity of the polymeric liquid. The polymer particles are described by the distribution function $\psi(t,x,R)$. Here the polymer elongation $R$ is bounded in ball $ B=B(0,R_{0})$ which means that the extensibility of the polymers is finite and $x\in\mathbb{R}^n$.
$\tau$ is an extra-stress tensor which generated by the polymer particles effect and $P$ is the pressure. Moreover the potential $\mathcal{U}(R)=-k\log(1-(\frac{|R|}{|R_{0}|})^{2})$ for some $k>0$. $\sigma(u)$ is the drag term. In general, $\sigma(u)=\nabla u$. For the co-rotation case, $\sigma(u)=\frac{\nabla u-(\nabla u)^{T}}{2}$.

In the paper we will take $\sigma(u)=\nabla u$, $\beta=1$ and $R_{0}=1$.
Notice that $(u,\psi)$ with $u=0$ and $$\psi_{\infty}(R)=\frac{e^{-\mathcal{U}(R)}}{\int_{B}e^{-\mathcal{U}(R)}dR}=\frac{(1-|R|^2)^k}{\int_{B}(1-|R|^2)^kdR},$$
is a trivial solution of \ref{eq0}.
Let $\tilde{\psi}=\psi-\psi_\infty$, by a simple calculation, we can rewrite \ref{eq0} for the following system:
\begin{align}\label{eq1}
\left\{
\begin{array}{ll}
u_t+(u\cdot\nabla)u-\Delta u+\nabla{P}=div~\tau,  ~~~~~~~div~u=0,\\[1ex]
\tilde{\psi}_t+u\cdot\nabla\tilde{\psi}+\mathcal{L}(\tilde{\psi})=div_{R}(-\nabla u\cdot{R}\tilde{\psi})+\psi_{\infty}\nabla u\cdot{R}\nabla_{R}\mathcal{U}, \\[1ex]
\tau_{ij}=\int_{B}(R_{i}\nabla_{R_j}\mathcal{U})\tilde{\psi} dR, \\[1ex]
u|_{t=0}=u_0,~\tilde{\psi}|_{t=0}=\psi_0-\psi_\infty, \\[1ex]
\psi_{\infty}\nabla_{R}\frac{\tilde{\psi}}{\psi_{\infty}}\cdot{n}=0 ~~~~ \text{on} ~~~~ \partial B(0,1) ,\\[1ex]
\end{array}
\right.
\end{align}
where $\mathcal{L}(\tilde{\psi})=-div_{R}(\psi_{\infty}\nabla_{R}\frac{\tilde{\psi}}{\psi_{\infty}})$. \\
{\bf Remark.} As in the reference \cite{Masmoudi2013}, one can deduce that $\psi=0$ on $\partial B(0,1)$.

There are a lot of mathematical results about the dumbbell model. M. Renardy \cite{Renardy} established the local well-posedness in Sobolev spaces with potential $\mathcal{U}(R)=(1-|R|^2)^{1-\sigma}$ for $\sigma>1$. Later, B. Jourdain, T. Leli\`{e}vre, and
C. Le Bris \cite{Jourdain} proved local existence of a stochastic differential equation with potential $\mathcal{U}(R)=-k\log(1-|R|^{2})$ in the case $k>3$ for a Couette flow. H. Zhang and P. Zhang \cite{Zhang-H} proved local well-posedness of \eqref{eq1} with $d=3$ in weighted Sobolev spaces. For the co-rotation case, F. Lin, P. Zhang, and Z. Zhang \cite{F.Lin} obtain a global existence results with $d=2$ and $k > 6$. If the initial data is perturbation around equilibrium, N. Masmoudi \cite{Masmoudi2008} proved global well-posedness of \eqref{eq1} for $k>0$. In the co-rotation case with $d=2$, he \cite{Masmoudi2008} obtained a global result for $k>0$ without any small conditions. W. Luo and Z. Yin improved the result to Besov spaces \cite{Luo-Yin-NA}. In the co-rotation case, A. V. Busuioc, I. S. Ciuperca, D. Iftimie and L. I. Palade \cite{Busuioc} obtain a global existence result with only the small condition on $\psi_0$. The global existence of weak solutions in $L^2$ was proved recently by N. Masmoudi \cite{Masmoudi2013} under some entropy conditions.

Recently, M. Schonbek \cite{Schonbek} studied the $L^2$ decay of the velocity for the co-rotation
FENE dumbbell model, and obtained the
decay rate $(1+t)^{-\frac{d}{4}+\frac{1}{2}}$ with $d\geq 2$ and $u_0\in L^1$.
Moreover, she conjectured that the sharp decay rate should be $(1+t)^{-\frac{d}{4}}$,~$d\geq 2$.
However, she failed to get it because she could not use the bootstrap argument as in \cite{Schonbek1985} due to the
additional stress tensor.  More recently, W. Luo and Z. Yin \cite{Luo-Yin,Luo-Yin2} improved the decay rate to $(1+t)^{-\frac{d}{4}}$ with $d\geq 2$.

In \cite{Luo-Yin}, W. Luo and Z. Yin also consider the general case($\sigma(u)=\nabla u$) and obtained the $L^2$
decay rate $(1+t)^{-\frac{d}{4}+\frac{1}{2}}$  for $d\geq 3$ and the
decay rate $\ln^{-l}(1+t)$ for $d=2$ with $u_0\in L^1$, $\sup_{R}\|\tilde{\psi}_0\|_{L^1}$. Obviously, this is not the optimal decay rate.
In this paper, we improve the decay rate to $(1+t)^{-\frac{d}{4}}$ for $d\geq 2$ with $u_0\in L^1$, $\tilde{\psi}_0\in\mathcal{L}^{q}(L^1)$.
Firstly, we use the Fourier splitting method after standard energy estimation. The most difficult for us is that the additional stress tensor $\tau$ does not decay fast enough. Thus, we failed to use the bootstrap argument as in \cite{Schonbek1985,Luo-Yin,Luo-Yin2}. To deal with this term, we need to use the coupling effect between $u$ and $\tilde{\psi}$. Motivated by \cite{He2009}, we take Fourier transform with respect to $x$ in \eqref{eq1}:
\begin{align}\label{1eq2}
\left\{
\begin{array}{ll}
\hat{u}_t+F(u\cdot\nabla u)+|\xi|^2 \hat{u}+i\xi\hat{P}=i\xi\cdot\hat{\tau},  \\[1ex]
(\hat{\tilde{\psi}})_t+F(u\cdot\nabla\tilde{\psi})+\mathcal{L}(\hat{\tilde{\psi}})=div_{R}(-F(\nabla u\cdot{R}\tilde{\psi}))+\psi_{\infty}(i\xi\otimes\hat{u})\cdot{R}\nabla_{R}\mathcal{U}, \\[1ex]
i\xi\cdot\bar{\hat{u}}=-\overline{i\xi\cdot\hat{u}}=0.
\end{array}
\right.
\end{align}
Multiplying $\bar{\hat{u}}(t,\xi)$ to the first equation of \eqref{1eq2} and taking the real part, since $\hat{P}i\xi\cdot\bar{\hat{u}}=0$, we obtain
\begin{align}
\frac 1 2 \frac d {dt} |\hat{u}(t,\xi)|^2+\mathcal{R}e[F(u\cdot\nabla u)\cdot\bar{\hat{u}}(t,\xi)]+|\xi|^2 |\hat{u}(t,\xi)|^2=\mathcal{R}e[i\xi\otimes\bar{\hat{u}}(t,\xi):\hat{\tau}].
\end{align}
Multiplying $\frac 1 {\psi_\infty} \overline{F(\tilde{\psi})}(t,\xi,R)$ to the second equation of \eqref{1eq2}, integrating over $B$ with $R$ and taking the real part, we get
\begin{align}
&\frac 1 2 \frac d {dt} \|\hat{\tilde{\psi}}\|^2_{\mathcal{L}^2}
+\mathcal{R}e[\int_{B}F(u\cdot\nabla\tilde{\psi})\cdot\frac 1 {\psi_\infty} \overline{F(\tilde{\psi})}(t,\xi,R)dR]+ \int_{B}\psi_\infty|\nabla_R \frac {\hat{\tilde{\psi}}} {\psi_\infty}|^2 dR  \\ \notag
&=\mathcal{R}e[i\xi\otimes\hat{u}:\bar{\hat{\tau}}]
+\mathcal{R}e[\int_{B}div_{R}(-F(\nabla u\cdot{R}\tilde{\psi}))\cdot\frac 1 {\psi_\infty} \overline{F(\tilde{\psi})}(t,\xi,R)dR].
\end{align}
The key observation is that $\mathcal{R}e[i\xi\otimes\bar{\hat{u}}(t,\xi):\hat{\tau}]+\mathcal{R}e[i\xi\otimes\hat{u}:\bar{\hat{\tau}}]=0$. 
Thus, we can cancel the stress term $\tau$ in Fourier space. Finally we obtain the $L^2$ decay rate $(1+t)^{-\frac{d}{4}}$ with $d\geq 3$ by virtue of the bootstrap argument. For $d=2$, we need an additional absorption method to prove the $L^2$ decay rate of solutions to the FENE model.

The paper is organized as follows. In Section 2, we introduce some notations and state some lemmas which will be useful in this paper. Finally, we give our main results. In Section 3, we study the $L^2$ decay rate of solutions to the FENE model for $d\geq 3$ by using the Fourier splitting method and the bootstrap argument. For $d=2$, we have to adjusted our method.

\section{Preliminaries and main results}
In this section we will introduce some notations and useful lemmas which will be used in the sequel.

If the function spaces are over $\mathbb{R}^d$ and $B$ with respect to the variable $x$ and $R$, for simplicity, we drop $\mathbb{R}^d$ and $B$ in the notation of function spaces if there is no ambiguity.

For $p\geq1$, we denote by $\mathcal{L}^{p}$ the space
$$\mathcal{L}^{p}=\big\{\psi \big|\|\psi\|^{p}_{\mathcal{L}^{p}}=\int_{B} \psi_{\infty}|\frac{\psi}{\psi_{\infty}}|^{p}dR<\infty\big\}.$$

We will use the notation $L^{p}_{x}(\mathcal{L}^{q})$ to denote $L^{p}[\mathbb{R}^{d};\mathcal{L}^{q}]:$
$$L^{p}_{x}(\mathcal{L}^{q})=\big\{\psi \big|\|\psi\|_{L^{p}_{x}(\mathcal{L}^{q})}=(\int_{\mathbb{R}^{d}}(\int_{B} \psi_{\infty}|\frac{\psi}{\psi_{\infty}}|^{q}dR)^{\frac{p}{q}}dx)^{\frac{1}{p}}<\infty\big\}.$$
When $p=q$, we also use the short notation $\mathcal{L}^p$ for $L^p_x(\mathcal{L}^{p})$ if there is no ambiguity.

The symbol $\widehat{f}=\mathcal{F}(f)$ denotes the Fourier transform of $f$.

Moreover, we denote by $\dot{\mathcal{H}}^1$ the space
$$\dot{\mathcal{H}}^1=\big\{\psi\big| \|\psi\|_{\dot{\mathcal{H}}^1}=(\int_B|\nabla_R \frac \psi {\psi_\infty}|^2\psi_\infty dR)^{\frac{1}{2}}\big\}.$$
We agree that $\nabla$ stands for $\nabla_x$ and $div$ stands for $div_x$.

\begin{lemm}\label{Lemma0}
For $d=2,~p\in[2,+\infty)$, then there exists a constant $C$ such that
 $$\|f\|_{L^{p}}\leq C \|f\|^{\frac 2 p}_{L^{2}}\|\nabla f\|^{\frac {p-2} p}_{L^{2}}.$$
For $d\geq 3$, then there exists a constant $C$ such that
 $$\|f\|_{L^{p}}\leq C \|\nabla f\|_{L^{2}}$$
where $p=\frac {2d} {d-2}$.
\end{lemm}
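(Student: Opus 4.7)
The plan is to prove these as standard Gagliardo--Nirenberg--Sobolev inequalities, treating the two cases separately since the $d\geq 3$ bound is just a pure Sobolev embedding while the $d=2$ bound is a genuine interpolation estimate.

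For $d\geq 3$ with $p=\frac{2d}{d-2}$, I would establish $\dot H^1(\mathbb{R}^d)\hookrightarrow L^{2d/(d-2)}$ via the Riesz potential representation. For Schwartz $f$, one has the pointwise identity
\[
f(x)=c_d\int_{\mathbb{R}^d}\frac{(x-y)\cdot\nabla f(y)}{|x-y|^d}\,dy,
\]
so $|f(x)|\leq C(I_1|\nabla f|)(x)$, where $I_1$ is the Riesz potential of order $1$. The Hardy--Littlewood--Sobolev inequality with $\alpha=1$ maps $L^2\to L^{2d/(d-2)}$, yielding $\|f\|_{L^{p}}\leq C\|\nabla f\|_{L^2}$. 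Density extends this to all $f$ with $\nabla f\in L^2$.

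For $d=2$ and $p\in[2,\infty)$, I would reduce the estimate to the $L^1$--Sobolev embedding $W^{1,1}(\mathbb{R}^2)\hookrightarrow L^2(\mathbb{R}^2)$, which itself follows by the standard one-dimensional representation $|g(x_1,x_2)|\leq\int_{\mathbb{R}}|\partial_1 g|\,dt_1$ (similarly in $x_2$) together with Cauchy--Schwarz in both coordinates. Applied to $g=|f|^{p/2}$, one gets
\[
\|f\|_{L^{p}}^{p/2}=\|g\|_{L^2}\leq C\|\nabla g\|_{L^1}\leq C\tfrac{p}{2}\,\||f|^{p/2-1}\nabla f\|_{L^1}\leq C\tfrac{p}{2}\,\|f\|_{L^{p-2}}^{(p-2)/2}\|\nabla f\|_{L^2},
\]
where the last step uses Cauchy--Schwarz. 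This gives the recursive bound
\[
\|f\|_{L^{p}}^{p/2}\leq C_p\,\|f\|_{L^{p-2}}^{(p-2)/2}\|\nabla f\|_{L^2}.
\]

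To close the argument and reach the target exponent $\frac{2}{p}$ on $\|f\|_{L^2}$, I would combine this recursion with the Hölder interpolation $\|f\|_{L^{p-2}}\leq\|f\|_{L^2}^{\theta}\|f\|_{L^{p}}^{1-\theta}$ for the appropriate $\theta\in(0,1]$ (determined by $\tfrac{1}{p-2}=\tfrac{\theta}{2}+\tfrac{1-\theta}{p}$) and absorb the resulting power of $\|f\|_{L^p}$ into the left-hand side; alternatively, one can argue by induction on even integer values of $p$ and use Hölder interpolation to fill in the non-integer cases. The main technical point (and the only real obstacle) is this exponent bookkeeping: once the scaling check $\frac{2}{p}+\frac{p-2}{p}=1$ is verified, the inequality is scale-invariant so the constants work out, and the claim follows.
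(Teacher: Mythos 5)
The paper states Lemma \ref{Lemma0} without any proof, treating it as the standard Gagliardo--Nirenberg--Sobolev (for $d\geq 3$) and Ladyzhenskaya-type (for $d=2$) inequalities, so there is no in-paper argument to compare against; what you supply is a self-contained derivation, and it is essentially correct. Your $d\geq 3$ case (Riesz potential representation plus Hardy--Littlewood--Sobolev with $\alpha=1$, $L^2\to L^{2d/(d-2)}$, then density) is the classical proof of $\dot H^1\hookrightarrow L^{2d/(d-2)}$ and is fine. Your $d=2$ recursion $\|f\|_{L^p}^{p/2}\leq C_p\|f\|_{L^{p-2}}^{(p-2)/2}\|\nabla f\|_{L^2}$ obtained from $W^{1,1}(\mathbb{R}^2)\hookrightarrow L^2(\mathbb{R}^2)$ applied to $g=|f|^{p/2}$ is also correct. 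The one point to fix is your first proposed way of closing that recursion: the interpolation $\|f\|_{L^{p-2}}\leq\|f\|_{L^2}^{\theta}\|f\|_{L^p}^{1-\theta}$ requires $p-2\in[2,p]$, i.e.\ $p\geq 4$; for $2<p<4$ the exponent $p-2$ lies below $2$, the equation $\tfrac{1}{p-2}=\tfrac{\theta}{2}+\tfrac{1-\theta}{p}$ forces $\theta>1$, and on a set of infinite measure $\|f\|_{L^{p-2}}$ is not controlled by $\|f\|_{L^2}$ and $\|f\|_{L^p}$ at all. Your alternative route does work and should be taken as the actual argument: establish $p=4$ directly from the recursion (Ladyzhenskaya), induct over even integers $p$, and recover general $p\in(2,\infty)$ by interpolating between the two neighbouring even exponents; the scaling identity $\tfrac{2}{p}+\tfrac{p-2}{p}=1$ then guarantees the exponents combine correctly.
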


The following lemma allows us to estimate the extra stress tensor $\tau$.
\begin{lemm}\cite{Masmoudi2008}\label{Lemma1}
 If $\int_B \psi dR=0$ and
   $\displaystyle\int_B\bigg|\nabla _R  \bigg(\displaystyle\frac{\psi}{\psi _\infty }\bigg)\bigg|^2{\psi _\infty} dR<\infty$, then there exists a constant $C$ such that
   \[\int_{B}\frac{|\psi|^{2}}{\psi_{\infty}}dR\leq C \displaystyle\int_B\bigg|\nabla _R  \bigg(\displaystyle\frac{\psi}{\psi _\infty }\bigg)\bigg|^2{\psi _\infty} dR.\]
\end{lemm}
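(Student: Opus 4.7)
The natural strategy is to reformulate the inequality in terms of $g := \psi/\psi_\infty$, which transforms the statement into a weighted Poincaré inequality with respect to the probability measure $d\mu := \psi_\infty\, dR$ on $B$. Under this substitution, the left-hand side becomes $\int_B g^2\, d\mu$ and the right-hand side becomes $\int_B |\nabla_R g|^2\, d\mu$, while the hypothesis $\int_B \psi\, dR = 0$ reads $\int_B g\, d\mu = 0$. Thus the lemma reduces to: for every $g$ in the weighted Sobolev space with zero $d\mu$-mean,
$$\int_B g^2\, d\mu \;\leq\; C \int_B |\nabla_R g|^2\, d\mu.$$

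I would prove this by a standard compactness/contradiction argument. Assume for contradiction that no such $C$ exists; then there is a sequence $g_n$ with $\int_B g_n\, d\mu = 0$, $\|g_n\|_{L^2(d\mu)} = 1$, and $\|\nabla_R g_n\|_{L^2(d\mu)} \to 0$. The bounded sequence $\{g_n\}$ in the weighted $H^1$-space has a subsequence that converges weakly, and the goal is to upgrade this to strong convergence in $L^2(d\mu)$. The limit $g_*$ then has zero weak gradient, hence is constant, and by the preserved mean-zero condition must be identically $0$, contradicting the normalization $\|g_*\|_{L^2(d\mu)} = 1$.

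The main obstacle is the compact embedding $H^1(d\mu) \hookrightarrow L^2(d\mu)$, which is not immediate because $\psi_\infty$ vanishes like $(1-|R|^2)^k$ on $\partial B$, so the weight is degenerate. I would handle this by splitting $B$ into an interior piece $\{|R| \leq 1-\delta\}$ and a boundary layer $\{1-\delta < |R| < 1\}$. On the interior piece the weight is bounded above and below by positive constants, so classical Rellich--Kondrachov gives compactness. On the boundary layer a Hardy-type inequality adapted to the weight $(1-|R|^2)^k$ allows one to dominate $\|g_n\|_{L^2(d\mu, \text{layer})}$ by $\delta \cdot \|\nabla_R g_n\|_{L^2(d\mu)}$ up to an absorbable remainder, making the boundary contribution arbitrarily small by choosing $\delta$ small.

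An alternative, spectral route would bypass compactness by analyzing the self-adjoint operator $\mathcal{L}\psi = -\mathrm{div}_R(\psi_\infty \nabla_R(\psi/\psi_\infty))$ on $L^2(d\mu)$: its kernel consists precisely of the constants, and the inequality is exactly the statement that $\mathcal{L}$ has a positive spectral gap on the orthogonal complement of the constants. Either route reduces the lemma to a verification that the FENE weight is sufficiently well-behaved near $\partial B$, which is the true technical core; everything else is soft functional analysis.
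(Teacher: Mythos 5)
The paper itself does not prove this lemma: it is quoted from \cite{Masmoudi2008} and used as a black box, so there is no in-paper argument to compare against. Your reduction to a weighted Poincar\'e inequality for the probability measure $d\mu=\psi_\infty\,dR$ is exactly right ($g=\psi/\psi_\infty$, mean-zero against $d\mu$, same weight on both sides), and the compactness-plus-boundary-layer scheme is a legitimate route, close in spirit to the original proof, whose real content is likewise a Hardy-type control of the degenerate weight $(1-|R|^2)^k$ near $\partial B$. Two points deserve care if you write this out. First, the boundary-layer estimate must be verified for \emph{every} $k>0$, since the lemma is used for all $k>0$: writing $s=1-|R|$ and $f(s)=f(\delta)-\int_s^\delta f'(\sigma)\,d\sigma$, Cauchy--Schwarz against the weight $\sigma^k$ gives $\int_0^\delta|f|^2s^k\,ds\le C_k\,\delta^2\int_0^\delta|f'|^2s^k\,ds+C\,\delta^{k+1}|f(\delta)|^2$ in all three regimes $k<1$, $k=1$ (a harmless logarithm appears), and $k>1$; this computation, together with controlling the trace term $|f(\delta)|^2$ by the interior $L^2$ norm and the gradient, is the actual technical core and should not be left as ``a Hardy-type inequality.'' Second, the hypothesis only gives $\psi\in L^1(B)$ and $\nabla_R(\psi/\psi_\infty)\in L^2(d\mu)$, not a priori $\psi/\psi_\infty\in L^2(d\mu)$, so before normalizing $\|g_n\|_{L^2(d\mu)}=1$ in the contradiction argument you should either first establish the qualitative membership $g\in L^2(d\mu)$ (e.g.\ by truncating $g$ at level $N$, applying the inequality on the truncations, and using Fatou) or prove the inequality on a dense class and pass to the limit. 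With these two points supplied the argument is complete, and the spectral-gap reformulation you mention is an equivalent repackaging of the same inequality rather than an independent proof.
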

\begin{lemm}\label{Lemma2}
\cite{Masmoudi2008} For all $\varepsilon>0$, there exists a constant $C_{\varepsilon}$ such that
$$|\tau|^2\leq\varepsilon\int_{B}\psi_{\infty}|\nabla_{R}\frac{\psi}{\psi_{\infty}}|^{2}dR
+C_{\varepsilon}\int_{B}\frac{|\psi|^{2}}{\psi_{\infty}}dR,$$
\end{lemm}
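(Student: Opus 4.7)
The plan is to exploit the algebraic identity $\nabla_R\mathcal{U}\,\psi_\infty = -\nabla_R\psi_\infty$, which recasts the singular weight $\nabla_R\mathcal{U}$ as a derivative of $\psi_\infty$, and then to localize near the singular boundary $\partial B$ with a smooth cutoff. I would pick $\chi_\delta$ with $\chi_\delta\equiv 1$ on $\{|R|\le 1-2\delta\}$, $\chi_\delta\equiv 0$ on $\{|R|\ge 1-\delta\}$, and $|\nabla_R\chi_\delta|\lesssim\delta^{-1}$, and split
\[
\tau_{ij} \,=\, \underbrace{\int_B \chi_\delta\,R_i\,\nabla_{R_j}\mathcal{U}\,\psi\,dR}_{I_{ij}} \,+\, \underbrace{\int_B (1-\chi_\delta)\,R_i\,\nabla_{R_j}\mathcal{U}\,\psi\,dR}_{J_{ij}}.
\]

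For the interior piece $I_{ij}$, the bound $|\nabla_R\mathcal{U}|\le 2k/(1-|R|^2)\le C(k)\delta^{-1}$ on $\mathrm{supp}(\chi_\delta)$ together with Cauchy--Schwarz against $\sqrt{\psi_\infty}$ yields $|I_{ij}|^2\le C(k,\delta)\int_B |\psi|^2/\psi_\infty\,dR$. For the boundary piece $J_{ij}$, I would substitute $\nabla_{R_j}\mathcal{U}\cdot\psi = -(\psi/\psi_\infty)\nabla_{R_j}\psi_\infty$ and integrate by parts; the boundary contribution vanishes because $\psi\equiv 0$ on $\partial B$ by the remark following \eqref{eq1}. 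This produces three terms
\[
J_{ij} \,=\, -\!\int_B R_i(\nabla_{R_j}\chi_\delta)\,\psi\,dR \,+\, \delta_{ij}\!\int_B (1-\chi_\delta)\,\psi\,dR \,+\, \int_B (1-\chi_\delta)\,R_i\,\psi_\infty\,\nabla_{R_j}\!\Bigl(\tfrac{\psi}{\psi_\infty}\Bigr)dR,
\]
all supported in $\{|R|\ge 1-2\delta\}$. The FENE decay $\psi_\infty\propto(1-|R|^2)^k$ gives $\int_{|R|\ge 1-2\delta}\psi_\infty\,dR\lesssim \delta^{k+1}$, and a term-by-term Cauchy--Schwarz (using $|\nabla_R\chi_\delta|\lesssim\delta^{-1}$ for the first piece) then produces
\[
|J_{ij}|^2 \,\le\, C\delta^{k-1}\!\int_B \frac{|\psi|^2}{\psi_\infty}\,dR \,+\, C\delta^{k+1}\!\int_B \psi_\infty\,\Bigl|\nabla_R\tfrac{\psi}{\psi_\infty}\Bigr|^2 dR.
\]

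Given $\varepsilon>0$, I would choose $\delta=\delta(\varepsilon,k)$ small enough that $C\delta^{k+1}\le\varepsilon$; the residual $\delta$-dependent coefficients in front of $\|\psi\|_{\mathcal{L}^2}^2$ are then absorbed into a single constant $C_\varepsilon$, yielding the claim. The main obstacle is the boundary piece $J_{ij}$: a direct Cauchy--Schwarz on $\tau_{ij}$ would require $\int |R|^2|\nabla_R\mathcal{U}|^2\psi_\infty\,dR<\infty$, which holds only for $k>1$, so integration by parts is essential. The reason the $\varepsilon$-form survives for \emph{every} $k>0$ is quantitative: $\psi_\infty$ vanishes to order $k$ at $\partial B$, making the mass in a $\delta$-neighbourhood $O(\delta^{k+1})$, which beats the $\delta^{-1}$ cost of the cutoff derivative and leaves a genuine small prefactor $\delta^{k+1}$ on the $\dot{\mathcal{H}}^1$-term.
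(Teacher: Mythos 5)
The paper does not actually prove this lemma---it is quoted directly from \cite{Masmoudi2008}---and your argument is correct and is essentially the standard proof from that reference: the identity $\nabla_R\mathcal{U}\,\psi_\infty=-\nabla_R\psi_\infty$, a cutoff at distance $\delta$ from $\partial B$, a crude Cauchy--Schwarz on the interior piece, and an integration by parts on the boundary shell whose $\psi_\infty$-mass $O(\delta^{k+1})$ beats the $\delta^{-1}$ from the cutoff and supplies the small prefactor $\varepsilon$ for every $k>0$. The only step to make explicit is the vanishing of the boundary term in the integration by parts on the degenerate boundary, which the remark following \eqref{eq1} (that $\psi=0$ on $\partial B(0,1)$) covers.
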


The existence of the global solutions with small data was established in \cite{Masmoudi2008}.
\begin{theo}\label{th1}
Let $d\geq2$ and $s>1+\frac d 2$. Assume $u_0\in H^s$ and $\tilde{\psi}_0$ satisfies $\tilde{\psi}_0\in H^s(\mathcal{L}^2)$ and $\int_B\tilde{\psi}_0 dR=0$ ~$a.e.$ in $x$. A constant $\ep_0$ exists such that if
$$\|u_0\|^2_{H^s}+\|\tilde{\psi}_0\|^2_{H^s(\mathcal{L}^2)}\leq \ep_0, $$
then there exists a unique global solution $(u,\tilde{\psi})$ of \eqref{eq1} such that $u\in C(R^{+}; H^s)\cap L^2_{loc}(R^{+}; H^{s+1})$ and $\tilde{\psi}\in C(R^{+}; H^s(\mathcal{L}^2))\cap L^2_{loc}(R^{+}; H^{s}(\dot{\mathcal{H}}^1))$ with $\int_B \tilde{\psi}dR=0$~$a.e.$ in $x$. Moreover
$$\|u\|^2_{H^s}+\|\tilde{\psi}\|^2_{H^s(\mathcal{L}^2)}+\int_{0}^{\infty}\|\nabla u\|^2_{H^s}+\|\tilde{\psi}\|^2_{H^s(\dot{\mathcal{H}}^1)}dt\leq C\ep_0, $$
where $C$ is a constant dependent on the initial data.
\end{theo}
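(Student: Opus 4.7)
My plan is to follow an energy-method scheme in three stages: (i) construct a local solution by an approximation procedure, (ii) derive a closed a priori energy inequality whose structure crucially exploits the symmetric coupling between $u$ and $\tilde\psi$, and (iii) close the estimate by smallness and a continuation argument.

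For step (i) I would regularize by a Friedrichs truncation in $x$ together with a Galerkin projection on the weighted space $\mathcal{L}^2$, which reduces the system to an ODE with locally Lipschitz right-hand side. The only subtlety is the unbounded drift $\nabla_R\mathcal{U}$, which is tamed by always testing the $\tilde\psi$-equation against $\tilde\psi/\psi_\infty$, so that the singular factor is absorbed into $\psi_\infty\nabla_R(\tilde\psi/\psi_\infty)=\nabla_R\tilde\psi+\tilde\psi\nabla_R\mathcal{U}$. The constraint $\int_B\tilde\psi\,dR=0$ is propagated by integrating the $\tilde\psi$-equation over $B$ and invoking the no-flux boundary condition.

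For step (ii), apply $\partial^\alpha$ with $|\alpha|\le s$ to the first two equations of \eqref{eq1} and pair them, respectively, with $\partial^\alpha u$ in $L^2_x$ and with $\psi_\infty^{-1}\partial^\alpha\tilde\psi$ in $L^2_x(\mathcal{L}^2)$. The key structural identity is the cancellation
\[
\int_{\mathbb{R}^d}\partial^\alpha(\mathrm{div}\,\tau)\cdot\partial^\alpha u\,dx + \int_{\mathbb{R}^d}\!\!\int_B \partial^\alpha\bigl(\psi_\infty\nabla u\cdot R\,\nabla_R\mathcal{U}\bigr)\cdot\frac{\partial^\alpha\tilde\psi}{\psi_\infty}\,dR\,dx = 0,
\]
obtained by integrating the first term by parts in $x$ and using $\tau_{ij}=\int_B R_i\nabla_{R_j}\mathcal{U}\,\tilde\psi\,dR$; this is precisely the $x$-space analogue of the Fourier identity $\mathcal{R}e[i\xi\otimes\bar{\hat u}:\hat\tau]+\mathcal{R}e[i\xi\otimes\hat u:\bar{\hat\tau}]=0$ recorded in the introduction. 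The surviving dissipation is $\|\nabla u\|_{H^s}^2$ from $-\Delta u$ together with $\|\tilde\psi\|_{H^s(\dot{\mathcal{H}}^1)}^2$ from $\mathcal{L}$, and Lemma \ref{Lemma1} promotes the latter to coercivity in $H^s(\mathcal{L}^2)$. The remaining nonlinearities are trilinear: the two transports $u\cdot\nabla u$ and $u\cdot\nabla\tilde\psi$ are estimated by Kato--Ponce commutators combined with Sobolev embedding (valid since $s>1+d/2$), while $\mathrm{div}_R(-\nabla u\cdot R\,\tilde\psi)$ is integrated by parts in $R$ against $\partial^\alpha\tilde\psi/\psi_\infty$, using the boundary condition and $|R|\le 1$, to produce a bound of the form $\|\nabla u\|_{L^\infty}\|\tilde\psi\|_{\mathcal{L}^2}\|\tilde\psi\|_{\dot{\mathcal{H}}^1}$. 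Summing over $|\alpha|\le s$ and applying Young's inequality yields
\[
\tfrac{d}{dt}E(t)+D(t)\le C\sqrt{E(t)}\,D(t),
\]
with $E=\|u\|_{H^s}^2+\|\tilde\psi\|_{H^s(\mathcal{L}^2)}^2$ and $D=\|\nabla u\|_{H^s}^2+\|\tilde\psi\|_{H^s(\dot{\mathcal{H}}^1)}^2$. Choosing $\ep_0$ small enough that $C\sqrt{2\ep_0}\le\tfrac12$, a standard bootstrap forces $E(t)\le 2\ep_0$ on the maximal interval of existence; hence the local solution extends globally, and integrating the differential inequality in time gives the stated control on $\int_0^\infty D(t)\,dt$.

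\textbf{Main obstacle.} The hard part is that $\tau$ carries no self-dissipation: without the cancellation identity above, a direct estimate of $\mathrm{div}\,\tau$ in $H^s$ would cost one $R$-derivative of $\tilde\psi$ and break the energy closure. Lemma \ref{Lemma2} is what allows any residual $\|\tau\|_{H^s}^2$ arising in the nonlinear bookkeeping to be absorbed into $\varepsilon\|\tilde\psi\|_{H^s(\dot{\mathcal{H}}^1)}^2$ plus a lower-order $\mathcal{L}^2$-term which is in turn controlled by Lemma \ref{Lemma1}. Finally, preserving the no-flux boundary condition $\psi_\infty\nabla_R(\tilde\psi/\psi_\infty)\cdot n=0$ under every integration by parts in $R$ is what makes all boundary terms vanish and ties the whole scheme together.
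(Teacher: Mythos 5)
The paper gives no proof of this theorem: it is imported verbatim from Masmoudi's global existence result (the paper states ``The existence of the global solutions with small data was established in \cite{Masmoudi2008}''), so there is no in-paper argument to compare against. Your outline reproduces the standard scheme of that reference --- approximate local construction, $H^s$ energy estimates built on the cancellation between $\int \partial^\alpha(\mathrm{div}\,\tau)\cdot\partial^\alpha u\,dx$ and the pairing of $\partial^\alpha(\psi_\infty\nabla u\cdot R\nabla_R\mathcal{U})$ with $\partial^\alpha\tilde\psi/\psi_\infty$, coercivity via the weighted Poincar\'e inequality of Lemma \ref{Lemma1} (which applies to $\partial^\alpha\tilde\psi$ because $\int_B\partial^\alpha\tilde\psi\,dR=0$ propagates), and closure of $\frac{d}{dt}E+D\le C\sqrt{E}\,D$ by smallness --- and the key structural points are correctly identified. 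Be aware, though, that the genuinely delicate technical content (local existence in the degenerate weighted space, and justifying every integration by parts in $R$ at $\partial B$ where $\psi_\infty$ vanishes and $\tilde\psi/\psi_\infty$ is a priori singular) is asserted rather than carried out in your sketch; those are the parts for which one really needs the machinery of \cite{Masmoudi2008}.
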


Our main result can be stated as follows.
\begin{theo}\label{th2}
Let $d\geq2$ and $s>1+\frac d 2$. Let $(u,\tilde{\psi})$ be the strong solution of \eqref{eq1} with the initial data $(u_0,\tilde{\psi}_0)$ under the condition of Theorem \eqref{th1}. In addition, if $u_0\in L^1$ and $\tilde{\psi}_0\in \mathcal{L}^2(L^1)$. There exists a constant $C$ such that
\begin{align}
\|u\|_{L^2}\leq C(1+t)^{-\frac{d}{4}},
\end{align}
\begin{align}
\|\tilde{\psi}\|_{L^2(\mathcal{L}^2)}\leq C(1+t)^{-\frac{d}{4}-\frac{1}{2}}.
\end{align}
\end{theo}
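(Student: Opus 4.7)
My plan is to run Schonbek's Fourier splitting method on the coupled energy $E(t,\xi) := |\hat u(t,\xi)|^2 + \|\hat{\tilde\psi}(t,\xi)\|^2_{\mathcal{L}^2}$, exploiting the exact cancellation of the stress contribution highlighted in the introduction. Adding the Fourier-level identity obtained by testing the velocity equation against $\bar{\hat u}$ to the one obtained by testing the polymer equation against $\psi_\infty^{-1}\overline{\hat{\tilde\psi}}$ kills $\hat\tau$, since $\mathcal{R}e[i\xi\otimes\bar{\hat u}:\hat\tau]+\mathcal{R}e[i\xi\otimes\hat u:\bar{\hat\tau}]=0$. Integrating in $\xi$, using incompressibility to eliminate the transport nonlinearities, and absorbing the leftover cubic term $\iint \nabla u\cdot R\,\tilde\psi\cdot \nabla_R(\tilde\psi/\psi_\infty)\,dR\,dx$ into the dissipation via the smallness provided by Theorem \ref{th1}, I obtain the clean basic inequality
$$\frac{d}{dt}\bigl(\|u\|^2_{L^2}+\|\tilde\psi\|^2_{L^2(\mathcal{L}^2)}\bigr)+\|\nabla u\|^2_{L^2}+\|\tilde\psi\|^2_{L^2(\dot{\mathcal{H}}^1)}\leq 0.$$

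\textbf{Fourier splitting and the pointwise Fourier bound.} Setting $S(t)=\{\xi:|\xi|^2\leq C_0/(1+t)\}$, the standard Schonbek maneuver together with Lemma \ref{Lemma1} (which turns $\|\tilde\psi\|^2_{L^2(\dot{\mathcal{H}}^1)}$ into control of $\|\tilde\psi\|^2_{L^2(\mathcal{L}^2)}$) yields
$$\frac{d}{dt}\bigl(\|u\|^2_{L^2}+\|\tilde\psi\|^2_{L^2(\mathcal{L}^2)}\bigr)+\frac{C_0}{1+t}\bigl(\|u\|^2_{L^2}+\|\tilde\psi\|^2_{L^2(\mathcal{L}^2)}\bigr)\leq \frac{C_0}{1+t}\int_{S(t)}E(t,\xi)\,d\xi.$$
The main task is thus a pointwise bound $E(t,\xi)\leq M$ on $S(t)$. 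I will obtain it by writing Duhamel's formula for $\hat u$ and $\hat{\tilde\psi}$ in Fourier, starting from $|\hat u_0(\xi)|\leq \|u_0\|_{L^1}$ and $\|\hat{\tilde\psi}_0(\xi)\|_{\mathcal{L}^2}\leq \|\tilde\psi_0\|_{\mathcal{L}^2(L^1)}$, and controlling $|\hat\tau(t,\xi)|$ via Lemma \ref{Lemma2} applied in the physical $x$-variable. A crude first pass gives $\int_{S(t)}E\,d\xi\lesssim (1+t)^{-d/2}$, and Gronwall then produces a preliminary decay for $\|u\|^2_{L^2}+\|\tilde\psi\|^2_{L^2(\mathcal{L}^2)}$.

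\textbf{Bootstrap closure and the two-dimensional obstruction.} Feeding the preliminary decay back into the Duhamel representation (the bilinear and stress terms now decay faster, thanks to the Fourier cancellation which has already removed the slowest $\tau$-piece) sharpens the pointwise bound on $\hat u$, and iterating converges in finitely many steps to $\|u\|^2_{L^2}\lesssim (1+t)^{-d/2}$ for $d\geq 3$. The polymer decay $\|\tilde\psi\|^2_{L^2(\mathcal{L}^2)}\lesssim (1+t)^{-d/2-1}$ then follows by plugging the velocity decay into the $\tilde\psi$-only part of the energy inequality and using the zeroth-order damping coming again from Lemma \ref{Lemma1}. \textbf{The main obstacle} is the closure in $d=2$: every bootstrap step improves the exponent only by a fixed amount and the residual integrals $\int_0^t(1+s)^{-1}\,ds$ generate logarithmic losses, exactly the obstruction that limited \cite{Luo-Yin} to $\ln^{-l}(1+t)$. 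I plan to bypass the iteration by an absorption argument, multiplying the Fourier-level inequality by $(1+t)^m$ with $m$ slightly larger than $d/2=1$ and using the two-dimensional Gagliardo--Nirenberg interpolation from Lemma \ref{Lemma0} to put the borderline term on the left-hand side, so that the logarithm is absorbed rather than iterated.
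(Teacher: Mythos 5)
Your outline follows the paper's route in its essentials: the coupled Fourier energy $|\hat u|^2+\|\hat{\tilde\psi}\|^2_{\mathcal{L}^2}$, the cancellation $\mathcal{R}e[i\xi\otimes\bar{\hat u}:\hat\tau]+\mathcal{R}e[i\xi\otimes\hat u:\bar{\hat\tau}]=0$, Schonbek's splitting, and a bootstrap for $d\geq3$. But there are two concrete gaps. First, the claimed rate $\|\tilde\psi\|_{L^2(\mathcal{L}^2)}\leq C(1+t)^{-\frac d4-\frac12}$ cannot be obtained by ``plugging the velocity decay into the $\tilde\psi$-only part of the energy inequality'': the damped inequality coming from Lemma~\ref{Lemma1} and Lemma~\ref{Lemma2} reads $\frac{d}{dt}\|\tilde\psi\|^2_{L^2(\mathcal{L}^2)}+\|\tilde\psi\|^2_{L^2(\mathcal{L}^2)}\leq C\|\nabla u\|^2_{L^2}$, because the source $\int\nabla u\cdot\tau\,dx$ is controlled by $\|\nabla u\|^2_{L^2}$, not by $\|u\|^2_{L^2}$. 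You therefore need $\|\nabla u\|_{L^2}\leq C(1+t)^{-\frac d4-\frac12}$ first, which requires a separate first-order energy estimate (apply $\nabla^{\alpha}$ with $|\alpha|=1$, control the convection and drag terms via Lemma~\ref{Lemma0} and the smallness of Theorem~\ref{th1}) followed by another Fourier-splitting step using $\int_{S(t)}|\xi|^2|\hat u|^2d\xi\leq C(1+t)^{-1}\|u\|^2_{L^2}$; this block is absent from your plan. Relatedly, your ``crude first pass'' cannot give $\int_{S(t)}E\,d\xi\lesssim(1+t)^{-d/2}$: with only the uniform bound $\int_0^t\|u\|^4_{L^2}ds\leq Ct$ from Theorem~\ref{th1}, the term $\int_{S(t)}\int_0^t|F(u\otimes u)|^2\,ds\,d\xi$ is only $O((1+t)^{-\frac d2+1})$, which is exactly why the bootstrap is needed; and your proposal to control $|\hat\tau|$ by Lemma~\ref{Lemma2} inside a Duhamel formula would reintroduce the slowly decaying stress contribution that the cancellation was designed to eliminate.

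Second, the $d=2$ closure as you describe it does not work. The borderline quantity is $(1+t)^{-2}\int_0^t\|u\|^4_{L^2}ds$, a power of the $L^2$ norm itself, so the two-dimensional Gagliardo--Nirenberg inequality of Lemma~\ref{Lemma0} has nothing to act on. The paper's absorption is of a different nature: multiply the split energy inequality by $(1+t)^2$ (not by an exponent slightly above $1$), use the global smallness $\|u\|^2_{L^2}\leq C\ep_0$ to write $\int_0^t\|u\|^4_{L^2}ds\leq C\ep_0\int_0^t\|u\|^2_{L^2}ds$, and absorb $C\ep_0\int_0^t(1+s)\|u\|^2_{L^2}ds$ into the matching dissipation term $\int_0^t(1+s)\|u\|^2_{L^2}ds$ generated on the left by the Fourier-splitting damping. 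This yields the intermediate rate $(1+t)^{-\frac23}$ for $\|u\|^2_{L^2}$, after which a single genuine bootstrap step gives $(1+t)^{-1}$. Without this smallness-based absorption, or a concrete substitute for it, the logarithmic loss you correctly identify is not removed.
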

\begin{rema}
In \cite{Schonbek1991}, M. Schonbek showed that $(1+t)^{-\frac{d}{4}},\, d\geq 2$ is the optimal $L^2$ decay rate for the Navier-Stokes equations with $u_0\in L^1$. Note that
if $\tilde{\psi}$ is independent on $x$, then $div~\tau=0$. Then, the FENE model is reduced to the Navier-Stokes equations. Thus, the $L^2$ decay rate of $u$ for the FENE model which we obtained in Theorem \ref{th2} is sharp for all $d\geq 2$.
\end{rema}

\section{The $L^2$ decay rate}
This section is devoted to investigating the long time behaviour for the velocity of the FENE dumbbell model. The most difficult for us is that the additional stress tensor $\tau$ does not decay fast enough. Thus, we failed to use the bootstrap argument as in \cite{Schonbek1985,Luo-Yin,Luo-Yin2}. To deal with this term, we need to use the coupling effect between $u$ and $\tilde{\psi}$. Motivated by \cite{He2009} for $d=3$, we obtain the $L^2$ decay rate in Theorem \eqref{th2} by taking Fourier transform in \eqref{eq1} and using the bootstrap argument.

Let us first state our main result concerning the $L^2$ decay rate of the global classical solutions for $d\geq3$.
\begin{prop}\label{pro1}
Let $(u,\tilde{\psi})$ be the strong solution of \eqref{eq1} with the initial data $(u_0,\tilde{\psi}_0)$ under the condition of Theorem \eqref{th2} for $d\geq3$. There exists a constant $C$ such that
\begin{align}
\|u\|_{L^2}\leq C(1+t)^{-\frac{d}{4}},
\end{align}
\begin{align}
\|\tilde{\psi}\|_{L^2(\mathcal{L}^2)}\leq C(1+t)^{-\frac{d}{4}-\frac{1}{2}}.
\end{align}
\end{prop}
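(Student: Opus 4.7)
The plan is to combine the two Fourier-side energy identities displayed in the introduction so that the stress tensor $\tau$ cancels, then run Schonbek's Fourier splitting method together with a bootstrap on the decay exponent. Adding the two identities and integrating in $\xi$, the cross terms $\mathcal{R}e[i\xi\otimes\bar{\hat u}:\hat\tau]+\mathcal{R}e[i\xi\otimes\hat u:\bar{\hat\tau}]$ vanish pointwise, while the transport nonlinearities $\int F(u\cdot\nabla u)\cdot\bar{\hat u}\,d\xi$ and $\iint F(u\cdot\nabla\tilde\psi)\overline{F(\tilde\psi)}/\psi_\infty\,dR\,d\xi$ vanish by Plancherel and $\mathrm{div}\,u=0$. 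The remaining drag contribution $\iint \mathrm{div}_R(-F(\nabla u\cdot R\tilde\psi))\overline{F(\tilde\psi)}/\psi_\infty\,dR\,d\xi$ is reduced, via integration by parts in $R$, Cauchy--Schwarz, and Lemma \ref{Lemma1}, to a bilinear term absorbable by the dissipation thanks to the smallness provided by Theorem \ref{th1}. This yields a clean inequality
$$\frac{d}{dt}E(t)+\int_{\mathbb{R}^d}|\xi|^2|\hat u|^2\,d\xi+\|\tilde\psi\|_{L^2(\dot{\mathcal H}^1)}^2\lesssim 0,\qquad E(t):=\|u\|_{L^2}^2+\|\tilde\psi\|_{L^2(\mathcal L^2)}^2.$$

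Next I would implement the Fourier splitting on the ball $S(t)=\{|\xi|^2\leq k/(1+t)\}$. Lemma \ref{Lemma1} upgrades $\|\tilde\psi\|_{L^2(\dot{\mathcal H}^1)}^2$ to control $\|\tilde\psi\|_{L^2(\mathcal L^2)}^2$, so choosing $k$ large leads to
$$\frac{d}{dt}\bigl[(1+t)^k E(t)\bigr]\lesssim (1+t)^{k-1}\int_{S(t)}\bigl(|\hat u(t,\xi)|^2+\|\hat{\tilde\psi}(t,\xi)\|_{\mathcal L^2}^2\bigr)\,d\xi.$$
The low-frequency integral is bounded by writing $\hat u,\hat{\tilde\psi}$ in Duhamel form: since $u_0\in L^1$ and $\tilde\psi_0\in\mathcal L^2(L^1)$ their Fourier data are uniformly bounded, and crude time integration of the bilinear terms $F(u\otimes u)$, $F(u\tilde\psi)$ (using the $H^s$ bounds from Theorem \ref{th1}) gives $|\hat u(t,\xi)|+\|\hat{\tilde\psi}(t,\xi)\|_{\mathcal L^2}\lesssim (1+t)^{\beta_0}$. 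Substituting produces an initial decay $E(t)\lesssim (1+t)^{-\alpha_0}$ with $\alpha_0<d/2$. Feeding the improved rate back into the Duhamel estimate strictly decreases $\beta$ at each step, and finitely many iterations saturate the exponent at $\alpha=d/2$, giving $\|u\|_{L^2}\lesssim (1+t)^{-d/4}$. To upgrade $\tilde\psi$ to $(1+t)^{-d/4-1/2}$ I would exploit that the forcing $\psi_\infty(i\xi\otimes\hat u)\cdot R\nabla_R\mathcal U$ vanishes to first order in $|\xi|$; combined with the coercivity of $\mathcal L$ on $\mathcal L^2$ (Lemma \ref{Lemma1}), a fixed-$\xi$ Duhamel in $R$ produces $\|\hat{\tilde\psi}(t,\xi)\|_{\mathcal L^2}\lesssim |\xi|\sup_{s\leq t}|\hat u(s,\xi)|+e^{-ct}\|\hat{\tilde\psi}_0(\xi)\|_{\mathcal L^2}$ (plus controlled nonlinear remainders), and the extra $|\xi|$ gives the additional $(1+t)^{-1/2}$ in the low-frequency integral.

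The main obstacle I anticipate is closing the bootstrap at the sharp exponent. Once $\tau$ has been cancelled in Fourier space, the stress tensor no longer blocks the energy method, but the time integrals of $F(u\otimes u)$ and the $\nabla u\cdot R\tilde\psi$ drag term are only borderline integrable: each bootstrap iteration trades a slower input decay for a faster output, and the gain per step shrinks as one approaches $d/4$. Ensuring that the gain stays strictly positive at every step -- especially in the tight case $d=3$, where the dimensional factor is smallest -- while simultaneously preserving the exact $\tau$-cancellation structure of the $u$/$\tilde\psi$ coupling (so that the stress tensor never re-enters through the lower-order bilinear estimates) is the delicate part of the argument. For the additional factor $(1+t)^{-1/2}$ in the $\tilde\psi$ decay, the subtlety is instead to track the $O(|\xi|)$ vanishing of the forcing uniformly in $R$, which requires Lemma \ref{Lemma2} to dominate the singular weight $R\cdot\nabla_R\mathcal U$ near $\partial B$.
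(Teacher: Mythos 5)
Your overall architecture (Fourier splitting on $S(t)$, a first crude decay rate, then a bootstrap that closes in one step for $d\geq4$ and in two steps for $d=3$, and a separate upgrade of the $\tilde{\psi}$ rate using the coercivity of $\mathcal{L}$ and the decay of $\nabla u$) matches the paper. But there is a genuine gap at the heart of the low-frequency estimate, and it is exactly the difficulty this paper is written to overcome. You invoke the $\tau$-cancellation only after integrating the two Fourier-side identities over all $\xi$; at that point you have merely re-derived the physical-space energy inequality \eqref{ineq0}, where the cancellation is not needed. You then estimate $\int_{S(t)}|\hat{u}(t,\xi)|^2\,d\xi$ by writing $\hat{u}$ in Duhamel form and listing only the bilinear sources $F(u\otimes u)$ and $F(u\tilde{\psi})$. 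The Duhamel formula for $\hat{u}$ alone also contains $\int_0^t e^{-|\xi|^2(t-s)}\,i\xi\cdot\hat{\tau}\,ds$, and this term does not cancel in a linear representation of $\hat{u}$: the identity $\mathcal{R}e[i\xi\otimes\bar{\hat{u}}:\hat{\tau}]+\mathcal{R}e[i\xi\otimes\hat{u}:\bar{\hat{\tau}}]=0$ is a statement about the quadratic quantity $|\hat{u}(t,\xi)|^2+\|\hat{\tilde{\psi}}(t,\xi)\|^2_{\mathcal{L}^2}$, pointwise in $\xi$, before any integration over $\xi$. Your proposal never uses it in that form, so the stress term silently disappears from precisely the step where it is the obstruction.

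Moreover the omitted term cannot be recovered by the crude $L^\infty_\xi$ device you use for the genuinely bilinear sources: $\|\widehat{u\otimes u}\|_{L^\infty_\xi}\leq\|u\|^2_{L^2}$ because $u\otimes u\in L^1_x$, whereas $\hat{\tau}\in L^\infty_\xi$ would require $\tau\in L^1_x$, i.e.\ $\tilde{\psi}\in L^1_x(\mathcal{L}^2)$, which is neither assumed nor propagated (only $H^s(\mathcal{L}^2)$, hence $L^2_x$-based, control is available). Estimating it in $L^2_\xi(S(t))$ instead yields at best a bounded, non-decaying contribution, which is why \cite{Luo-Yin} stalled at the rate $(1+t)^{-\frac d4+\frac12}$. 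The repair is the paper's device: keep the coupled quantity $|\hat{u}(t,\xi)|^2+\|\hat{\tilde{\psi}}(t,\xi)\|^2_{\mathcal{L}^2}$ pointwise in $\xi$, cancel the two $\tau$ cross-terms there, use Lemma \ref{Lemma1} to check that for $\xi\in S(t)$ the $R$-dissipation dominates $|\xi|^2\|\hat{\tilde{\psi}}\|^2_{\mathcal{L}^2}$, and only then integrate the resulting Gr\"onwall inequality \eqref{ineq4} over $S(t)$; in this pointwise setting the transport nonlinearities do not vanish by Plancherel and must be estimated by $|F(u\otimes u)|^2$ and its analogues. Your sketch of the $\tilde{\psi}$ upgrade would inherit the same problem unless rebuilt on this coupled pointwise energy; the paper sidesteps it by working in physical space with $\frac{d}{dt}\|\tilde{\psi}\|^2_{L^2(\mathcal{L}^2)}+\|\tilde{\psi}\|^2_{L^2(\mathcal{L}^2)}\leq C\|\nabla u\|^2_{L^2}$ after first establishing the decay of $\|\nabla u\|_{L^2}$.
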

\begin{proof}
Using standard energy estimation to \eqref{eq1}, we have
\begin{align*}
&\frac d {dt} (\|u\|^2_{L^2}+\|\tilde{\psi}\|^2_{L^2(\mathcal{L}^2)})+2\|\nabla u\|^2_{L^2}+2\int_{\mathbb{R}^{d}}\int_{B}\psi_{\infty}|\nabla_{R}\frac{\tilde{\psi}}{\psi_{\infty}}|^{2}dRdx  \\
&=2\int_{\mathbb{R}^{d}}\int_{B}\nabla_R\cdot(\nabla u R\tilde{\psi})\frac{\tilde{\psi}}{\psi_{\infty}}dRdx.
\end{align*}
Integrating by part and using Lemma \eqref{Lemma1}, Theorem \eqref{th1}, we get
\begin{align*}
\int_{\mathbb{R}^{d}}\int_{B}\nabla_R\cdot(\nabla u R\tilde{\psi})\frac{\tilde{\psi}}{\psi_{\infty}}dRdx
=\int_{\mathbb{R}^{d}}\int_{B}\psi_{\infty}\nabla u R\frac{\tilde{\psi}}{\psi_{\infty}}
\nabla_R\frac{\tilde{\psi}}{\psi_{\infty}}dRdx\leq C\ep_0 \int_{\mathbb{R}^{d}}\int_{B}\psi_{\infty}|\nabla_{R}\frac{\tilde{\psi}}{\psi_{\infty}}|^{2}dRdx,
\end{align*}
which implies that
\begin{align}\label{ineq0}
\frac d {dt} (\|u\|^2_{L^2}+\|\tilde{\psi}\|^2_{L^2(\mathcal{L}^2)})+\|\nabla u\|^2_{L^2}
+\int_{\mathbb{R}^{d}}\int_{B}\psi_{\infty}|\nabla_{R}\frac{\tilde{\psi}}{\psi_{\infty}}|^{2}dRdx\leq 0.
\end{align}
Define $S(t)=\{\xi:|\xi|^2\leq C_d(1+t)^{-1}\}$ where the constant $C_d$ will be chosen later on. Using Schonbek's strategy, we split the phase space into two time-dependent domain:
$$\|\nabla u\|^2_{L^2}=\int_{S(t)}|\xi|^2|\hat{u}(\xi)|^2 d\xi+\int_{S(t)^c}|\xi|^2|\hat{u}(\xi)|^2 d\xi.$$
Then we can easily deduce that
$$\frac {C_d} {1+t} \int_{S(t)^c}|\hat{u}(\xi)|^2 d\xi\leq\|\nabla u\|^2_{L^2},$$
which implies that
\begin{align}\label{ineq1}
\frac d {dt} (\|u\|^2_{L^2}+\|\tilde{\psi}\|^2_{L^2(\mathcal{L}^2)})+\frac {C_d} {1+t}\| u\|^2_{L^2}
+\int_{\mathbb{R}^{d}}\int_{B}\psi_{\infty}|\nabla_{R}\frac{\tilde{\psi}}{\psi_{\infty}}|^{2}dRdx\leq \frac {C_d} {1+t}\int_{S(t)}|\hat{u}(\xi)|^2 d\xi.
\end{align}
From now on, we focus on the $L^2$ estimate to the low frequency part of $u$. We take Fourier transform with respect to $x$ in \eqref{eq1}:
\begin{align}\label{eq2}
\left\{
\begin{array}{ll}
\hat{u}_t+F(u\cdot\nabla u)+|\xi|^2 \hat{u}+i\xi\hat{P}=i\xi\cdot\hat{\tau},  \\[1ex]
(\hat{\tilde{\psi}})_t+F(u\cdot\nabla\tilde{\psi})+\mathcal{L}(\hat{\tilde{\psi}})=div_{R}(-F(\nabla u\cdot{R}\tilde{\psi}))+\psi_{\infty}(i\xi\otimes\hat{u})\cdot{R}\nabla_{R}\mathcal{U}, \\[1ex]
i\xi\cdot\bar{\hat{u}}=-\overline{i\xi\cdot\hat{u}}=0.
\end{array}
\right.
\end{align}
Multiplying $\bar{\hat{u}}(t,\xi)$ to the first equation of \eqref{eq2} and taking the real part, since $\hat{P}i\xi\cdot\bar{\hat{u}}=0$, we obtain
\begin{align}\label{eq3}
\frac 1 2 \frac d {dt} |\hat{u}(t,\xi)|^2+\mathcal{R}e[F(u\cdot\nabla u)\cdot\bar{\hat{u}}(t,\xi)]+|\xi|^2 |\hat{u}(t,\xi)|^2=\mathcal{R}e[i\xi\otimes\bar{\hat{u}}(t,\xi):\hat{\tau}].
\end{align}
Multiplying $\frac 1 {\psi_\infty} \overline{F(\tilde{\psi})}(t,\xi,R)$ to the second equation of \eqref{eq2}, integrating over $B$ with $R$ and taking the real part, we get
\begin{align}\label{eq4}
&\frac 1 2 \frac d {dt} \|\hat{\tilde{\psi}}\|^2_{\mathcal{L}^2}
+\mathcal{R}e[\int_{B}F(u\cdot\nabla\tilde{\psi})\cdot\frac 1 {\psi_\infty} \overline{F(\tilde{\psi})}(t,\xi,R)dR]+ \int_{B}\psi_\infty|\nabla_R \frac {\hat{\tilde{\psi}}} {\psi_\infty}|^2 dR  \\ \notag
&=\mathcal{R}e[i\xi\otimes\hat{u}:\bar{\hat{\tau}}]
+\mathcal{R}e[\int_{B}div_{R}(-F(\nabla u\cdot{R}\tilde{\psi}))\cdot\frac 1 {\psi_\infty} \overline{F(\tilde{\psi})}(t,\xi,R)dR].
\end{align}
We can easily deduce that
$$-\mathcal{R}e[F(u\cdot\nabla u)\cdot\bar{\hat{u}}(t,\xi)]=-\mathcal{R}e[F(u\otimes u):i\xi\otimes\bar{\hat{u}}(t,\xi)]\leq \frac 1 2 (|F(u\otimes u)|^2+|\xi|^2 |\hat{u}(t,\xi)|^2),$$
and
$$\mathcal{R}e[i\xi\otimes\bar{\hat{u}}(t,\xi):\hat{\tau}]+\mathcal{R}e[i\xi\otimes\hat{u}:\bar{\hat{\tau}}]=0.$$
Using Lemma \eqref{eq1}, we get
\begin{align*}
\mathcal{R}e[\int_{B}F(u\cdot\nabla\tilde{\psi})\cdot\frac 1 {\psi_\infty} \overline{F(\tilde{\psi})}(t,\xi,R)dR]\leq
C_\delta\int_{B}\psi_\infty|F(u\cdot\nabla\frac {\tilde{\psi}} {\psi_\infty})|^2 dR+\delta\int_{B}\psi_\infty|\nabla_R \frac {\hat{\tilde{\psi}}} {\psi_\infty}|^2 dR,
\end{align*}
where $\delta$ is small enough.
Integrating by part and using Lemma \eqref{eq1}, we get
\begin{align*}
&\mathcal{R}e[\int_{B}div_{R}(-F(\nabla u\cdot{R}\tilde{\psi}))\cdot\frac 1 {\psi_\infty} \overline{F(\tilde{\psi})}(t,\xi,R)dR]=\mathcal{R}e[\int_{B}F(\nabla u\cdot{R}\tilde{\psi})\nabla_R (\frac 1 {\psi_\infty} \overline{F(\tilde{\psi})}(t,\xi,R))dR] \\ \notag
&\leq C_\delta\int_{B}\psi_\infty|F(\nabla u\cdot{R}\frac {\tilde{\psi}} {\psi_\infty})|^2 dR+\delta\int_{B}\psi_\infty|\nabla_R \frac {\hat{\tilde{\psi}}} {\psi_\infty}|^2 dR.
\end{align*}
Plugging the above estimates into \eqref{eq3} and \eqref{eq4}, then we have
\begin{align}\label{ineq2}
&\frac d {dt} (|\hat{u}(t,\xi)|^2+\|\hat{\tilde{\psi}}\|^2_{\mathcal{L}^2})
+|\xi|^2 |\hat{u}(t,\xi)|^2+\int_{B}\psi_\infty|\nabla_R \frac {\hat{\tilde{\psi}}} {\psi_\infty}|^2 dR  \\ \notag
&\leq C|F(u\otimes u)|^2+C_\delta\int_{B}\psi_\infty|F(u\cdot\nabla\frac {\tilde{\psi}} {\psi_\infty})|^2 dR+C_\delta\int_{B}\psi_\infty|F(\nabla u\cdot{R}\frac {\tilde{\psi}} {\psi_\infty})|^2 dR.
\end{align}
Consider $\xi\in S(t)$ and sufficiently large $t$ satisfies $CC_d\leq(1+t)$, using Lemma \eqref{Lemma1}, then we get
\begin{align}\label{ineq3}
|\xi|^2\|\hat{\tilde{\psi}}\|^2_{\mathcal{L}^2}\leq\int_{B}\psi_\infty|\nabla_R \frac {\hat{\tilde{\psi}}} {\psi_\infty}|^2 dR.
\end{align}
From \eqref{ineq2} and \eqref{ineq3}, we deduce that
\begin{align}\label{ineq4}
&|\hat{u}(t,\xi)|^2+\|\hat{\tilde{\psi}}\|^2_{\mathcal{L}^2}
\leq e^{-|\xi|^2 t}(|\hat{u}_0|^2
+\|\hat{\tilde{\psi}}_0\|^2_{\mathcal{L}^2})+C\int_{0}^{t}|F(u\otimes u)|^2 ds  \\ \notag
&+C_\delta\int_{0}^{t}\int_{B}\psi_\infty|F(u\cdot\nabla\frac {\tilde{\psi}} {\psi_\infty})|^2 dRds+C_\delta\int_{0}^{t}\int_{B}\psi_\infty|F(\nabla u\cdot{R}\frac {\tilde{\psi}} {\psi_\infty})|^2 dRds.
\end{align}
Integrating over $S(t)$ with $\xi$, then we have
\begin{align}\label{ineq5}
&\int_{S(t)}|\hat{u}(t,\xi)|^2+\|\hat{\tilde{\psi}}\|^2_{\mathcal{L}^2}d\xi
\leq \int_{S(t)} e^{-|\xi|^2 t}(|\hat{u}_0|^2
+\|\hat{\tilde{\psi}}_0\|^2_{\mathcal{L}^2})d\xi+C\int_{S(t)}\int_{0}^{t}|F(u\otimes u)|^2 dsd\xi  \\ \notag
&+C_\delta\int_{S(t)}\int_{0}^{t}\int_{B}\psi_\infty|F(u\cdot\nabla\frac {\tilde{\psi}} {\psi_\infty})|^2 dRdsd\xi+C_\delta\int_{S(t)}\int_{0}^{t}\int_{B}\psi_\infty|F(\nabla u\cdot{R}\frac {\tilde{\psi}} {\psi_\infty})|^2 dRdsd\xi.
\end{align}
Then with the additional assumption in Theorem \eqref{th2}, we obtain
\begin{align*}
\int_{S(t)} e^{-|\xi|^2 t}(|\hat{u}_0|^2
+\|\hat{\tilde{\psi}}_0\|^2_{\mathcal{L}^2})d\xi
&\leq\int_{S(t)} e^{-|\xi|^2 t}d\xi\cdot\||\hat{u}_0|^2
+\|\hat{\tilde{\psi}}_0\|^2_{\mathcal{L}^2}\|_{L^{\infty}(S(t))} \\
&\leq C(1+t)^{-\frac d 2}(\|u_0\|^2_{L^1}+\|\tilde{\psi}_0\|^2_{\mathcal{L}^2(L^1)}).
\end{align*}
Thanks to Minkowski's inequality and Theorem \eqref{th1}, we get
\begin{align}\label{ineq6}
\int_{S(t)}\int_{0}^{t}|F(u\otimes u)|^2 dsd\xi
&=\int_{0}^{t}\int_{S(t)}|F(u\otimes u)|^2 d\xi ds  \\ \notag
&\leq C\int_{S(t)}d\xi \int_{0}^{t}\||F(u\otimes u)|^2\|_{L^{\infty}}ds \\ \notag
&\leq C(1+t)^{-\frac d 2} \int_{0}^{t}\|u\|^4_{L^{2}}ds \\ \notag
&\leq C(1+t)^{-\frac d 2+1}.
\end{align}
Using Theorem \eqref{th1} and Lemma \eqref{Lemma1}, we get
\begin{align}\label{ineq7}
\int_{S(t)}\int_{0}^{t}\int_{B}\psi_\infty|F(u\cdot\nabla\frac {\tilde{\psi}} {\psi_\infty})|^2 dRdsd\xi
&\leq C\int_{S(t)}d\xi \int_{0}^{t}\int_{B}\|\psi_\infty|F(u\cdot\nabla\frac {\tilde{\psi}} {\psi_\infty})|^2\|_{L^{\infty}}dRds \\ \notag
&\leq C(1+t)^{-\frac d 2} \int_{0}^{t}\|u\|^2_{L^{2}}\|\nabla\tilde{\psi}\|^2_{L^{2}(\mathcal{L}^{2})}ds \\ \notag
&\leq C(1+t)^{-\frac d 2}.
\end{align}
A similar argument for the last term in \eqref{ineq5} yields
\begin{align}\label{ineq8}
\int_{S(t)}\int_{0}^{t}\int_{B}\psi_\infty|F(\nabla u\cdot{R}\frac {\tilde{\psi}} {\psi_\infty})|^2 dRdsd\xi
&\leq C\int_{S(t)}d\xi \int_{0}^{t}\int_{B}\|\psi_\infty|F(\nabla u\cdot{R}\frac {\tilde{\psi}} {\psi_\infty})|^2\|_{L^{\infty}}dRds \\ \notag
&\leq C(1+t)^{-\frac d 2} \int_{0}^{t}\|\nabla u\|^2_{L^{2}}\|\tilde{\psi}\|^2_{L^{2}(\mathcal{L}^{2})}ds \\ \notag
&\leq C(1+t)^{-\frac d 2}.
\end{align}
Plugging the above estimates into \eqref{ineq5}, we obtain
\begin{align}\label{ineq9}
\int_{S(t)}|\hat{u}(t,\xi)|^2 d\xi\leq C(1+t)^{-\frac d 2+1}.
\end{align}
According to \eqref{ineq1} and \eqref{ineq9}, we deduce that
\begin{align*}
\frac d {dt} (\|u\|^2_{L^2}+\|\tilde{\psi}\|^2_{L^2(\mathcal{L}^2)})+\frac {C_d} {1+t}\| u\|^2_{L^2}
+\int_{\mathbb{R}^{d}}\int_{B}\psi_{\infty}|\nabla_{R}\frac{\tilde{\psi}}{\psi_{\infty}}|^{2}dRdx\leq \frac {C_d} {1+t} C(1+t)^{-\frac d 2+1},
\end{align*}
from which we deduce that if $C_d\geq \frac d 2 +2$, then
\begin{align}\label{ineq10}
\|u\|^2_{L^2}+\|\tilde{\psi}\|^2_{L^2(\mathcal{L}^2)}\leq  C(1+t)^{-\frac d 2+1},
\end{align}
where $\frac d 2-1>0$.
With the above $L^2$ decay estimate for $u$, we can improve the $L^2$ decay rate in \eqref{ineq10} by using the bootstrap argument.  \\
If $d\geq4$, then we have
\begin{align*}
\int_{S(t)}\int_{0}^{t}|F(u\otimes u)|^2 dsd\xi
&\leq C(1+t)^{-\frac d 2} \int_{0}^{t}\|u\|^4_{L^{2}}ds \\ \notag
&\leq C(1+t)^{-\frac d 2} \int_{0}^{t}(1+s)^{-d+2}ds \\ \notag
&\leq C(1+t)^{-\frac d 2}.
\end{align*}
Then the proof of \eqref{ineq10} implies that
\begin{align}\label{ineq11}
\|u\|_{L^2}+\|\tilde{\psi}\|_{L^2(\mathcal{L}^2)}\leq  C(1+t)^{-\frac d 4},~~for~d\geq4.
\end{align}
If $d=3$, using \eqref{ineq10}, then we have
\begin{align*}
\int_{S(t)}\int_{0}^{t}|F(u\otimes u)|^2 dsd\xi
&\leq C(1+t)^{-\frac 3 2} \int_{0}^{t}\|u\|^4_{L^{2}}ds \\ \notag
&\leq C(1+t)^{-\frac 3 2} \int_{0}^{t}(1+s)^{-1}ds \\ \notag
&\leq C(1+t)^{-1}.
\end{align*}
Then the proof of \eqref{ineq10} implies that
\begin{align}\label{ineq12}
\|u\|^2_{L^2}+\|\tilde{\psi}\|^2_{L^2(\mathcal{L}^2)}\leq  C(1+t)^{-1},~~for~d=3.
\end{align}
Using \eqref{ineq12}, then we have
\begin{align*}
\int_{S(t)}\int_{0}^{t}|F(u\otimes u)|^2 dsd\xi
&\leq C(1+t)^{-\frac 3 2} \int_{0}^{t}\|u\|^4_{L^{2}}ds \\ \notag
&\leq C(1+t)^{-\frac 3 2} \int_{0}^{t}(1+s)^{-2}ds \\ \notag
&\leq C(1+t)^{-\frac 3 2}.
\end{align*}
Then the proof of \eqref{ineq10} implies that
\begin{align}\label{ineq13}
\|u\|_{L^2}+\|\tilde{\psi}\|_{L^2(\mathcal{L}^2)}\leq  C(1+t)^{-\frac 3 4},~~for~d=3.
\end{align}
We get the $L^2$ decay rate for $u$ from \eqref{ineq11} and \eqref{ineq13}.
To improve the decay rate for $\|\tilde{\psi}\|_{L^2(\mathcal{L}^2)}$, we need to consider the $L^2$ decay rate for $\nabla u$. \\
We claim that
\begin{align}\label{ineq14}
\|\nabla u\|_{L^2}+\|\nabla\tilde{\psi}\|_{L^2(\mathcal{L}^2)}+\|\tilde{\psi}\|_{L^2(\mathcal{L}^2)}\leq  C(1+t)^{-\frac d 4-\frac 1 2}.
\end{align}
Applying the operator $\nabla^{\alpha}$ with $|\alpha|=1$ to \eqref{eq1} and using standard energy estimation, we have
\begin{align*}
&\frac 1 2 \frac d {dt} (\|\nabla^{\alpha}u\|^2_{L^2}+\|\nabla^{\alpha}\tilde{\psi}\|^2_{L^2(\mathcal{L}^2)})+\|\nabla \nabla^{\alpha} u\|^2_{L^2}+\int_{\mathbb{R}^{d}}\int_{B}\psi_{\infty}|\nabla_{R}\frac{\nabla^{\alpha}\tilde{\psi}}{\psi_{\infty}}|^{2}dRdx  \\
&=\int_{\mathbb{R}^{d}}\int_{B}\nabla^{\alpha}\nabla_R\cdot(\nabla u R\tilde{\psi})\frac{\nabla^{\alpha}\tilde{\psi}}{\psi_{\infty}}dRdx
+\int_{\mathbb{R}^{d}}(\nabla^{\alpha}u\cdot\nabla)u\cdot\nabla^{\alpha}udx
+\int_{\mathbb{R}^{d}}\int_{B}(\nabla^{\alpha}u\cdot\nabla)\tilde{\psi}\nabla^{\alpha}\tilde{\psi}dRdx.
\end{align*}
Integrating by part and using Lemma \eqref{Lemma1}, Theorem \eqref{th1}, we get
\begin{align*}
\int_{\mathbb{R}^{d}}\int_{B}\nabla^{\alpha}\nabla_R\cdot(\nabla u R\tilde{\psi})\frac{\nabla^{\alpha}\tilde{\psi}}{\psi_{\infty}}dRdx\leq C\ep_0 (\|\nabla \nabla^{\alpha} u\|^2_{L^2}+\int_{\mathbb{R}^{d}}\int_{B}\psi_{\infty}|\nabla_{R}\frac{\nabla^{\alpha}\tilde{\psi}}{\psi_{\infty}}|^{2}dRdx).
\end{align*}
Integrating by part and using Theorem \eqref{th1}, we have
\begin{align*}
&\int_{\mathbb{R}^{d}}(\nabla^{\alpha}u\cdot\nabla)u\cdot\nabla^{\alpha}udx=-\int_{\mathbb{R}^{d}}(\nabla^{\alpha}u\cdot\nabla)\nabla^{\alpha}u\cdot udx \\
&\leq \|\nabla \nabla^{\alpha} u\|_{L^2}\|\nabla^{\alpha} u\|_{L^{\frac {2d} {d-2}}}\|u\|_{L^d} \leq C\ep_0 \|\nabla \nabla^{\alpha} u\|^2_{L^2}.
\end{align*}
Using Lemma \eqref{Lemma0}, Lemma \eqref{Lemma1} and Theorem \eqref{th1}, we get
\begin{align*}
\int_{\mathbb{R}^{d}}\int_{B}(\nabla^{\alpha}u\cdot\nabla)\tilde{\psi}\nabla^{\alpha}\tilde{\psi}dRdx
&\leq \|\nabla\tilde{\psi}\|_{L^d(\mathcal{L}^2)}\|\nabla^{\alpha} u\|_{L^{\frac {2d} {d-2}}}
\|\nabla^{\alpha}\tilde{\psi}\|^2_{L^2(\mathcal{L}^2)}  \\
&\leq C\ep_0 (\|\nabla \nabla^{\alpha} u\|^2_{L^2}+\int_{\mathbb{R}^{d}}\int_{B}\psi_{\infty}|\nabla_{R}\frac{\nabla^{\alpha}\tilde{\psi}}{\psi_{\infty}}|^{2}dRdx).
\end{align*}
Summing up all the estimates for $|\alpha|=1$, we deduce that
\begin{align}\label{ineq15}
\frac d {dt} (\|\nabla u\|^2_{L^2}+\|\nabla\tilde{\psi}\|^2_{L^2(\mathcal{L}^2)})+\|\nabla^2 u\|^2_{L^2}
+\int_{\mathbb{R}^{d}}\int_{B}\psi_{\infty}|\nabla_{R}\frac{\nabla\tilde{\psi}}{\psi_{\infty}}|^{2}dRdx\leq 0,
\end{align}
which implies that
\begin{align}\label{ineq16}
\frac d {dt} (\|\nabla u\|^2_{L^2}+\|\nabla\tilde{\psi}\|^2_{L^2(\mathcal{L}^2)})+\frac {C_d} {1+t}\|\nabla u\|^2_{L^2}
+\int_{\mathbb{R}^{d}}\int_{B}\psi_{\infty}|\nabla_{R}\frac{\nabla\tilde{\psi}}{\psi_{\infty}}|^{2}dRdx\leq \frac {C_d} {1+t}\int_{S(t)}|\xi|^2 |\hat{u}(\xi)|^2 d\xi.
\end{align}
According to \eqref{ineq11} and \eqref{ineq13}, we have
\begin{align*}
\frac {C_d} {1+t}\int_{S(t)}|\xi|^2 |\hat{u}(\xi)|^2 d\xi\leq {C_d}^2 (1+t)^{-2}\|u\|^2_{L^2}\leq C (1+t)^{-\frac d 2-2}.
\end{align*}
This together with \eqref{ineq16} ensures that
\begin{align*}
\|\nabla u\|_{L^2}+\|\nabla\tilde{\psi}\|_{L^2(\mathcal{L}^2)}\leq C (1+t)^{-\frac d 4-\frac 1 2}.
\end{align*}
Using standard energy estimation to the second equation in \eqref{eq1}, we get
\begin{align*}
&\frac d {dt} \|\tilde{\psi}\|^2_{L^2(\mathcal{L}^2)}+2\int_{\mathbb{R}^{d}}\int_{B}\psi_{\infty}|\nabla_{R}\frac{\tilde{\psi}}{\psi_{\infty}}|^{2}dRdx  \\
&=2\int_{\mathbb{R}^{d}}\int_{B}\nabla_R\cdot(\nabla u R\tilde{\psi})\frac{\tilde{\psi}}{\psi_{\infty}}dRdx+2\int_{\mathbb{R}^{d}}\int_{B}\nabla u\cdot{R}\nabla_{R}\mathcal{U}\tilde{\psi}dRdx.
\end{align*}
Using Lemma \eqref{Lemma1}, Lemma \eqref{Lemma2} and Theorem \eqref{th1}, we have
\begin{align*}
\int_{\mathbb{R}^{d}}\int_{B}\nabla_R\cdot(\nabla u R\tilde{\psi})\frac{\tilde{\psi}}{\psi_{\infty}}dRdx
=\int_{\mathbb{R}^{d}}\int_{B}\psi_{\infty}\nabla u R\frac{\tilde{\psi}}{\psi_{\infty}}
\nabla_R\frac{\tilde{\psi}}{\psi_{\infty}}dRdx\leq C\ep_0 \int_{\mathbb{R}^{d}}\int_{B}\psi_{\infty}|\nabla_{R}\frac{\tilde{\psi}}{\psi_{\infty}}|^{2}dRdx
\end{align*}
and
\begin{align*}
\int_{\mathbb{R}^{d}}\int_{B}\nabla u\cdot{R}\nabla_{R}\mathcal{U}\tilde{\psi}dRdx
=\int_{\mathbb{R}^{d}}\nabla u\cdot\tau dx\leq \delta\int_{\mathbb{R}^{d}}\int_{B}\psi_{\infty}|\nabla_{R}\frac{\tilde{\psi}}{\psi_{\infty}}|^{2}dRdx+C_\delta \|\nabla u\|^2_{L^2}.
\end{align*}
Using Lemma \eqref{Lemma1}, we obtain
\begin{align*}
\frac d {dt} \|\tilde{\psi}\|^2_{L^2(\mathcal{L}^2)}+\|\tilde{\psi}\|^2_{L^2(\mathcal{L}^2)}\leq C_\delta \|\nabla u\|^2_{L^2},
\end{align*}
from which we deduce that
\begin{align*}
\|\tilde{\psi}\|^2_{L^2(\mathcal{L}^2)}
&\leq \|\tilde{\psi}_0\|^2_{L^2(\mathcal{L}^2)}e^{-t}+C\int_{0}^{t}e^{-(t-s)}\|\nabla u\|^2_{L^2}ds  \\
&\leq C(e^{-t}+\int_{0}^{t}e^{-(t-s)}(1+s)^{-\frac d 2-1}ds)  \\
&\leq C(1+t)^{-\frac d 2-1},
\end{align*}
where in the last inequality we have used the fact that
\begin{align*}
&\lim_{t\rightarrow\infty}(1+t)^{\frac d 2+1}\int_{0}^{t}e^{-(t-s)}(1+s)^{-\frac d 2-1}ds = \lim_{t\rightarrow\infty}\frac {(1+t)^{\frac d 2+1}\int_{0}^{t}e^{s}(1+s)^{-\frac d 2-1}ds} {e^{t}}  \\
&=1+\lim_{t\rightarrow\infty}\frac {(\frac d 2+1)(1+t)^{\frac d 2}\int_{0}^{t}e^{s}(1+s)^{-\frac d 2-1}ds} {e^{t}}   \\
&=1.
\end{align*}
Finally, we prove the claim \eqref{ineq14}.
We thus complete the proof of Proposition \eqref{pro1}.
\end{proof}

By \eqref{ineq14}, Proposition \eqref{pro1} and Lemma \eqref{Lemma0}, we obtain the following corollary.
\begin{coro}
Under the assumption of Theorem \eqref{th2}, for $2\leq p\leq \frac {2d} {d-2}$, we have
\begin{align}
\|u\|_{L^p}\leq C(1+t)^{-\frac{d}{2}(1-\frac 1 p)},~~
\|\tilde{\psi}\|_{L^p(\mathcal{L}^2)}\leq C(1+t)^{-\frac{d}{4}-\frac{1}{2}}.
\end{align}
\end{coro}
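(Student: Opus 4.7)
The plan is to obtain both bounds purely by Gagliardo--Nirenberg type interpolation, feeding the $L^2$ and $\dot H^1$ decay rates from Proposition \ref{pro1} and \eqref{ineq14} into Lemma \ref{Lemma0}; no further use of the PDE is needed.

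For the velocity I would fix $p$ in the admissible range. If $d\geq 3$, I would first interpolate $L^p$ between $L^2$ and $L^{2d/(d-2)}$, obtaining
$$\|u\|_{L^p}\leq C\|u\|_{L^2}^{1-\theta}\|u\|_{L^{2d/(d-2)}}^{\theta},\qquad \theta=d\bigl(\tfrac12-\tfrac1p\bigr)\in[0,1],$$
and then use the Sobolev-embedding part of Lemma \ref{Lemma0} to replace $\|u\|_{L^{2d/(d-2)}}$ by $\|\nabla u\|_{L^2}$; when $d=2$ the first part of Lemma \ref{Lemma0} gives exactly the same inequality with $\theta=(p-2)/p=d(\tfrac12-\tfrac1p)$ directly. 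Substituting $\|u\|_{L^2}\lm (1+t)^{-d/4}$ from Proposition \ref{pro1} and $\|\nabla u\|_{L^2}\lm (1+t)^{-d/4-1/2}$ from \eqref{ineq14} produces the exponent $-\tfrac{d}{4}(1-\theta)-(\tfrac{d}{4}+\tfrac12)\theta=-\tfrac{d}{4}-\tfrac{\theta}{2}=-\tfrac{d}{2}(1-\tfrac1p)$, matching the claim.

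For $\tilde{\psi}$ I would run the same scheme, treating it as an $\mathcal{L}^2$-valued function on $\mathbb{R}^d$. Setting $g(x):=\|\tilde{\psi}(x,\cdot)\|_{\mathcal{L}^2}$ one has $\|\tilde{\psi}\|_{L^p(\mathcal{L}^2)}=\|g\|_{L^p}$ and, by Cauchy--Schwarz in $R$ (with weight $\psi_\infty$), $|\nabla g(x)|\leq \|\nabla\tilde{\psi}(x,\cdot)\|_{\mathcal{L}^2}$ almost everywhere; applying Lemma \ref{Lemma0} to the scalar $g$ therefore gives
$$\|\tilde{\psi}\|_{L^p(\mathcal{L}^2)}\leq C\|\tilde{\psi}\|_{L^2(\mathcal{L}^2)}^{1-\theta}\|\nabla\tilde{\psi}\|_{L^2(\mathcal{L}^2)}^{\theta}.$$
By \eqref{ineq14}, both $L^2(\mathcal{L}^2)$ norms on the right decay at the \emph{same} rate $(1+t)^{-d/4-1/2}$, so the product decays at that single rate independently of $p$, as stated.

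There is no real obstacle here; the only mild technicality is justifying the $\mathcal{L}^2$-valued Gagliardo--Nirenberg inequality above, which reduces to the pointwise Cauchy--Schwarz bound on $|\nabla g|$ followed by the scalar Lemma \ref{Lemma0}. The whole corollary is thus a one-step interpolation on top of the estimates already proved in Proposition \ref{pro1} and the proof of \eqref{ineq14}.
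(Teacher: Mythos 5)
Your proposal is correct and is essentially the paper's own argument: the authors obtain the corollary precisely by combining the decay rates of Proposition \ref{pro1} and the claim \eqref{ineq14} with the interpolation/Sobolev inequalities of Lemma \ref{Lemma0}, exactly as you do (including the vector-valued reduction for $\tilde{\psi}$ via the pointwise bound $|\nabla\|\tilde{\psi}(x,\cdot)\|_{\mathcal{L}^2}|\leq\|\nabla\tilde{\psi}(x,\cdot)\|_{\mathcal{L}^2}$, which the paper leaves implicit). Your exponent bookkeeping $-\tfrac{d}{4}-\tfrac{\theta}{2}=-\tfrac{d}{2}(1-\tfrac1p)$ checks out, so nothing further is needed.
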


Finally, for $d=2$, we have the following main result concerning the $L^2$ decay rate of the global classical solutions .
\begin{prop}\label{pro2}
Let $(u,\tilde{\psi})$ be the strong solution of \eqref{eq1} with the initial data $(u_0,\tilde{\psi}_0)$ under the condition of Theorem \eqref{th2} for $d=2$. In addition, if $u_0\in L^1$ and $\tilde{\psi}_0\in \mathcal{L}^2(L^1)$. There exists a constant $C$ such that
\begin{align}
\|u\|_{L^2}\leq C(1+t)^{-\frac{1}{2}},
\end{align}
\begin{align}
\|\tilde{\psi}\|_{L^2(\mathcal{L}^2)}\leq C(1+t)^{-1}.
\end{align}
\end{prop}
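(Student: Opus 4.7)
The plan is to follow the same Fourier-splitting and stress-cancellation strategy as in Proposition \ref{pro1}, but to replace the strict bootstrap (which improves the exponent at each step) by a self-referential \emph{absorption} argument that exploits the smallness $\ep_0$ of the initial data. First I repeat the steps that are dimension-independent: the basic energy identity
$$\frac{d}{dt}\big(\|u\|_{L^2}^2+\|\tilde\psi\|^2_{L^2(\mathcal{L}^2)}\big)+\|\nabla u\|_{L^2}^2+\int_{\mathbb{R}^2}\!\!\int_B\psi_\infty\Big|\nabla_R\tfrac{\tilde\psi}{\psi_\infty}\Big|^2dRdx\leq 0,$$
and Schonbek's splitting with $S(t)=\{\xi:|\xi|^2\leq C_d(1+t)^{-1}\}$, followed by the Fourier-side identity
$$\mathcal{R}e[i\xi\otimes\bar{\hat u}:\hat\tau]+\mathcal{R}e[i\xi\otimes\hat u:\bar{\hat\tau}]=0,$$
yield the pointwise bound for $|\hat u|^2+\|\hat{\tilde\psi}\|_{\mathcal{L}^2}^2$ by $e^{-|\xi|^2 t}(|\hat u_0|^2+\|\hat{\tilde\psi}_0\|^2)$ plus time integrals of $|F(u\otimes u)|^2$ and the two bilinear terms involving $\tilde\psi$, exactly as in \eqref{ineq4}.

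The new ingredient is to introduce the control quantity
$$M(t):=\sup_{0\le s\le t}(1+s)\Big(\|u(s)\|_{L^2}^2+\|\tilde\psi(s)\|_{L^2(\mathcal{L}^2)}^2\Big),$$
and to \emph{assume provisionally} $M(t)<\infty$. Bounding $|F(u\otimes u)(s,\xi)|\le \|u(s)\|_{L^2}^2\le M(t)(1+s)^{-1}$ gives
$$\int_0^t|F(u\otimes u)|^2ds \leq M(t)^2\!\int_0^t\!(1+s)^{-2}ds\leq CM(t)^2,$$
whereas the two mixed terms are estimated by $\|u\|_{L^2}\|\nabla\tilde\psi\|_{L^2(\mathcal{L}^2)}$ and $\|\nabla u\|_{L^2}\|\tilde\psi\|_{L^2(\mathcal{L}^2)}$ using Theorem \ref{th1} together with the definition of $M(t)$, which, after Cauchy-Schwarz in time against $\int_0^\infty\|\nabla u\|^2+\|\tilde\psi\|^2_{\dot{\mathcal{H}}^1}dt\le C\ep_0$, yields a contribution $\le C\ep_0 M(t)$. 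Since $|S(t)|\sim(1+t)^{-1}$, integrating over $S(t)$ we obtain
$$\int_{S(t)}|\hat u(t,\xi)|^2d\xi \leq C(1+t)^{-1}\Big(\|u_0\|_{L^1}^2+\|\tilde\psi_0\|_{\mathcal{L}^2(L^1)}^2+\ep_0 M(t)+M(t)^2\Big).$$

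Plugging this into the Fourier-split inequality \eqref{ineq1} and integrating the resulting ODE multiplied by $(1+t)^{C_d}$ with $C_d$ chosen sufficiently large gives
$$(1+t)\big(\|u\|_{L^2}^2+\|\tilde\psi\|^2_{L^2(\mathcal{L}^2)}\big)\leq C_1 + C_2\ep_0 M(t) + C_3 M(t)^2,$$
whence, taking the supremum in $t$, $M(t)\le C_1+C_2\ep_0 M(t)+C_3 M(t)^2$. Choosing $\ep_0$ small enough to absorb the linear term into the left side, and noting that $M(0)\lesssim\ep_0$ is small while $t\mapsto M(t)$ is continuous, a standard continuity argument forces $M(t)\le 2C_1$ for all $t\ge 0$. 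This yields $\|u\|_{L^2}^2\le C(1+t)^{-1}$. Once this first-order rate is in hand, I would repeat the Fourier-splitting argument after applying one spatial derivative to \eqref{eq1}, mimicking the derivation of \eqref{ineq14}, to obtain $\|\nabla u\|_{L^2}^2\leq C(1+t)^{-2}$. Finally, the elliptic-type identity for $\tilde\psi$ alone (as already used at the end of Proposition \ref{pro1}) gives
$$\frac{d}{dt}\|\tilde\psi\|_{L^2(\mathcal{L}^2)}^2+\|\tilde\psi\|_{L^2(\mathcal{L}^2)}^2\leq C\|\nabla u\|_{L^2}^2,$$
and Duhamel with the decay of $\|\nabla u\|$ delivers $\|\tilde\psi\|_{L^2(\mathcal{L}^2)}\leq C(1+t)^{-1}$.

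The main obstacle is the closure of the quadratic self-referential inequality $M(t)\le C_1+C_2\ep_0M(t)+C_3M(t)^2$: unlike $d\ge 3$ where each bootstrap step strictly improves the exponent, the expected rate $(1+t)^{-1}$ in $d=2$ is exactly the threshold that makes $\int_0^t\|u\|_{L^2}^4\,ds$ bounded but not small, so no strict improvement is available. One must instead check that every constant $C_1,C_2,C_3$ is genuinely independent of $M(t)$ and $t$ (arising only from $\|u_0\|_{L^1},\|\tilde\psi_0\|_{\mathcal{L}^2(L^1)}$ and the universal energy bound $C\ep_0$), so that smallness of $\ep_0$ together with continuity of $M$ closes the loop.
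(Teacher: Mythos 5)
Your overall architecture (energy identity, Schonbek splitting, Fourier-side cancellation of $\tau$, then derivative estimates and Duhamel for $\tilde\psi$) matches the paper, and you have correctly located the difficulty: in $d=2$ the rate $(1+t)^{-1/2}$ for $\|u\|_{L^2}$ sits exactly at the threshold where $\int_0^t\|u\|_{L^2}^4\,ds$ is bounded but not small. The gap is that your proposed resolution does not close. Your scheme yields $M(t)\le C_1+C_2\ep_0 M(t)+C_3M(t)^2$ with $C_3$ an absolute constant and $C_1\gtrsim \|u_0\|_{L^1}^2+\|\tilde\psi_0\|_{\mathcal{L}^2(L^1)}^2$. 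Smallness of $\ep_0$ absorbs only the linear term; to run the continuity argument on the remaining inequality $M\le C_1+C_3M^2$ you need the discriminant condition $4C_1C_3<1$, i.e.\ smallness of the $L^1$-type norms of the data --- which is not among the hypotheses (only the $H^s$ norms are assumed bounded by $\ep_0$). If $4C_1C_3\ge 1$, the quadratic inequality is satisfied by \emph{every} value of $M$ and carries no information, so your closing remark that one must ``check that every constant is genuinely independent of $M(t)$ and $t$'' is not sufficient: independence of the constants is not the issue, smallness of $C_1C_3$ is.

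The paper closes the loop differently, in two stages. It first linearizes the quartic term using the uniform energy bound of Theorem \ref{th1}, writing $\|u\|_{L^2}^4\le C\ep_0\|u\|_{L^2}^2$, so that after multiplying \eqref{ineq26} by $(1+t)^2$ the self-referential contribution becomes $C\ep_0\int_0^t(1+s)\|u\|_{L^2}^2\,ds$ --- linear in the unknown with the small coefficient $\ep_0$ --- and this is absorbed into the matching term produced on the left-hand side by the Fourier splitting. This yields only the suboptimal rate $\|u\|_{L^2}^2\le C(1+t)^{-2/3}$ in \eqref{ineq27}, but that already makes $\int_0^t\|u\|_{L^2}^4\,ds\lesssim\int_0^t(1+s)^{-4/3}\,ds$ convergent, and a single ordinary bootstrap step then gives $\|u\|_{L^2}\le C(1+t)^{-1/2}$ as in \eqref{ineq30}. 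The remainder of your plan (the $\nabla^{\alpha}$ energy estimate giving $\|\nabla u\|_{L^2}\le C(1+t)^{-1}$ and the Duhamel argument for $\tilde\psi$) agrees with the paper and is fine once the first-stage decay is established; to salvage your $M(t)$ formulation you would need to build the $\ep_0$-linearization into the estimate of the quartic term rather than bounding $\int_0^t\|u\|_{L^2}^4\,ds$ by $M(t)^2$.
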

\begin{proof}
Similar to the proof of Proposition \eqref{pro1}, we have the following standard energy estimation to \eqref{eq1}:
\begin{align}\label{ineq17}
\frac d {dt} (\|u\|^2_{L^2}+\|\tilde{\psi}\|^2_{L^2(\mathcal{L}^2)})+\|\nabla u\|^2_{L^2}
+\int_{\mathbb{R}^{2}}\int_{B}\psi_{\infty}|\nabla_{R}\frac{\tilde{\psi}}{\psi_{\infty}}|^{2}dRdx\leq 0.
\end{align}
Define $S(t)=\{\xi:|\xi|^2\leq 3(1+t)^{-1}\}$. According to Schonbek's strategy, we obtain
\begin{align}\label{ineq18}
\frac d {dt} (\|u\|^2_{L^2}+\|\tilde{\psi}\|^2_{L^2(\mathcal{L}^2)})+\frac {3} {1+t}\| u\|^2_{L^2}
+\int_{\mathbb{R}^{2}}\int_{B}\psi_{\infty}|\nabla_{R}\frac{\tilde{\psi}}{\psi_{\infty}}|^{2}dRdx\leq \frac {3} {1+t}\int_{S(t)}|\hat{u}(\xi)|^2 d\xi.
\end{align}
Now, we need to consider the $L^2$ estimate to the low frequency part of $u$. First, we have the following standard estimation to \eqref{eq2}:
\begin{align}\label{ineq19}
&\frac d {dt} (|\hat{u}(t,\xi)|^2+\|\hat{\tilde{\psi}}\|^2_{\mathcal{L}^2})
+|\xi|^2 |\hat{u}(t,\xi)|^2+\int_{B}\psi_\infty|\nabla_R \frac {\hat{\tilde{\psi}}} {\psi_\infty}|^2 dR  \\ \notag
&\leq C|F(u\otimes u)|^2+C_\delta\int_{B}\psi_\infty|F(u\cdot\nabla\frac {\tilde{\psi}} {\psi_\infty})|^2 dR+C_\delta\int_{B}\psi_\infty|F(\nabla u\cdot{R}\frac {\tilde{\psi}} {\psi_\infty})|^2 dR.
\end{align}
Consider $\xi\in S(t)$ and sufficiently large $t$ satisfies $3C\leq(1+t)$, using Lemma \eqref{Lemma1}, then we get
\begin{align}\label{ineq20}
|\xi|^2\|\hat{\tilde{\psi}}\|^2_{\mathcal{L}^2}\leq\int_{B}\psi_\infty|\nabla_R \frac {\hat{\tilde{\psi}}} {\psi_\infty}|^2 dR.
\end{align}
Using \eqref{ineq19}, \eqref{ineq20} and integrating over $S(t)$ with $\xi$, then we have
\begin{align}\label{ineq21}
&\int_{S(t)}|\hat{u}(t,\xi)|^2+\|\hat{\tilde{\psi}}\|^2_{\mathcal{L}^2}d\xi
\leq \int_{S(t)} e^{-|\xi|^2 t}(|\hat{u}_0|^2
+\|\hat{\tilde{\psi}}_0\|^2_{\mathcal{L}^2})d\xi+C\int_{S(t)}\int_{0}^{t}|F(u\otimes u)|^2 dsd\xi  \\ \notag
&+C_\delta\int_{S(t)}\int_{0}^{t}\int_{B}\psi_\infty|F(u\cdot\nabla\frac {\tilde{\psi}} {\psi_\infty})|^2 dRdsd\xi+C_\delta\int_{S(t)}\int_{0}^{t}\int_{B}\psi_\infty|F(\nabla u\cdot{R}\frac {\tilde{\psi}} {\psi_\infty})|^2 dRdsd\xi.
\end{align}
Then with the additional assumption in Theorem \eqref{th2}, we obtain
\begin{align*}
\int_{S(t)} e^{-|\xi|^2 t}(|\hat{u}_0|^2
+\|\hat{\tilde{\psi}}_0\|^2_{\mathcal{L}^2})d\xi
&\leq\int_{S(t)} e^{-|\xi|^2 t}d\xi\cdot\||\hat{u}_0|^2
+\|\hat{\tilde{\psi}}_0\|^2_{\mathcal{L}^2}\|_{L^{\infty}(S(t))} \\
&\leq C(1+t)^{-1}(\|u_0\|^2_{L^1}+\|\tilde{\psi}_0\|^2_{\mathcal{L}^2(L^1)}).
\end{align*}
Thanks to Minkowski's inequality and Theorem \eqref{th1}, we get
\begin{align}\label{ineq22}
\int_{S(t)}\int_{0}^{t}|F(u\otimes u)|^2 dsd\xi
&=\int_{0}^{t}\int_{S(t)}|F(u\otimes u)|^2 d\xi ds  \\ \notag
&\leq C\int_{S(t)}d\xi \int_{0}^{t}\||F(u\otimes u)|^2\|_{L^{\infty}}ds \\ \notag
&\leq C(1+t)^{-1} \int_{0}^{t}\|u\|^4_{L^{2}}ds.
\end{align}
Using Theorem \eqref{th1} and Lemma \eqref{Lemma1}, we get
\begin{align}\label{ineq23}
\int_{S(t)}\int_{0}^{t}\int_{B}\psi_\infty|F(u\cdot\nabla\frac {\tilde{\psi}} {\psi_\infty})|^2 dRdsd\xi
&\leq C\int_{S(t)}d\xi \int_{0}^{t}\int_{B}\|\psi_\infty|F(u\cdot\nabla\frac {\tilde{\psi}} {\psi_\infty})|^2\|_{L^{\infty}}dRds \\ \notag
&\leq C(1+t)^{-1} \int_{0}^{t}\|u\|^2_{L^{2}}\|\nabla\tilde{\psi}\|^2_{L^{2}(\mathcal{L}^{2})}ds \\ \notag
&\leq C(1+t)^{-1}.
\end{align}
A similar argument for the last term in \eqref{ineq21} yields
\begin{align}\label{ineq24}
\int_{S(t)}\int_{0}^{t}\int_{B}\psi_\infty|F(\nabla u\cdot{R}\frac {\tilde{\psi}} {\psi_\infty})|^2 dRdsd\xi
&\leq C\int_{S(t)}d\xi \int_{0}^{t}\int_{B}\|\psi_\infty|F(\nabla u\cdot{R}\frac {\tilde{\psi}} {\psi_\infty})|^2\|_{L^{\infty}}dRds \\ \notag
&\leq C(1+t)^{-1} \int_{0}^{t}\|\nabla u\|^2_{L^{2}}\|\tilde{\psi}\|^2_{L^{2}(\mathcal{L}^{2})}ds \\ \notag
&\leq C(1+t)^{-1}.
\end{align}
Plugging the above estimates into \eqref{ineq21}, we obtain
\begin{align}\label{ineq25}
\int_{S(t)}|\hat{u}(t,\xi)|^2 d\xi\leq C((1+t)^{-1}+(1+t)^{-1} \int_{0}^{t}\|u\|^4_{L^{2}}ds).
\end{align}
According to \eqref{ineq18} and \eqref{ineq25}, we deduce that
\begin{align}\label{ineq26}
&\frac d {dt} (\|u\|^2_{L^2}+\|\tilde{\psi}\|^2_{L^2(\mathcal{L}^2)})+\frac {3} {1+t}\| u\|^2_{L^2}
+\int_{\mathbb{R}^{2}}\int_{B}\psi_{\infty}|\nabla_{R}\frac{\tilde{\psi}}{\psi_{\infty}}|^{2}dRdx  \\ \notag
&\leq C\frac {3} {1+t} ((1+t)^{-1}+(1+t)^{-1} \int_{0}^{t}\|u\|^4_{L^{2}}ds).
\end{align}
Multiplying $(1+t)^2$ to \eqref{ineq26} and applying Lemma \eqref{Lemma1}, then we get
\begin{align*}
&\frac d {dt} ((1+t)^2\|u\|^2_{L^2}+(1+t)^2\|\tilde{\psi}\|^2_{L^2(\mathcal{L}^2)})+3(1+t)\| u\|^2_{L^2}
+(1+t)^2\int_{\mathbb{R}^{2}}\int_{B}\psi_{\infty}|\nabla_{R}\frac{\tilde{\psi}}{\psi_{\infty}}|^{2}dRdx  \\
&\leq 3C+3C\int_{0}^{t}\|u\|^4_{L^{2}}ds+2(1+t)\|u\|^2_{L^2}+2(1+t)\|\tilde{\psi}\|^2_{L^2(\mathcal{L}^2)} \\
&\leq 3C+3C\ep_0\int_{0}^{t}\|u\|^2_{L^{2}}ds+2(1+t)\|u\|^2_{L^2}
+2C(1+t)\int_{\mathbb{R}^{2}}\int_{B}\psi_{\infty}|\nabla_{R}\frac{\tilde{\psi}}{\psi_{\infty}}|^{2}dRdx.
\end{align*}
from which we deduce that
\begin{align*}
&(1+t)^2\|u\|^2_{L^2}+(1+t)^2\|\tilde{\psi}\|^2_{L^2(\mathcal{L}^2)}+\int_{0}^{t}(1+s)\| u\|^2_{L^2}ds  \\
&\leq \|u_0\|^2_{L^2}+\|\tilde{\psi}_0\|^2_{L^2(\mathcal{L}^2)}+Ct+C\ep_0\int_{0}^{t}\int_{0}^{s}\|u\|^2_{L^{2}}ds'ds  \\
&\leq Ct+C\ep_0\int_{0}^{t}(1+s)^{\frac 1 3}+(1+s)\|u\|^2_{L^{2}}ds  \\
&\leq C(1+t)^{\frac 4 3}+C\ep_0\int_{0}^{t}(1+s)\|u\|^2_{L^{2}}ds.
\end{align*}
For sufficiently small $\ep_0$, we obtain the initial $L^2$ decay rate for $u$
\begin{align}\label{ineq27}
\|u\|^2_{L^2}+\|\tilde{\psi}\|^2_{L^2(\mathcal{L}^2)}\leq  C(1+t)^{-\frac 2 3}.
\end{align}
With the above $L^2$ decay estimate for $u$, we can improve the $L^2$ decay rate in \eqref{ineq10} by using the bootstrap argument.
According to \eqref{ineq22}, we have
\begin{align*}
\int_{S(t)}\int_{0}^{t}|F(u\otimes u)|^2 dsd\xi
&\leq C(1+t)^{-1} \int_{0}^{t}\|u\|^4_{L^{2}}ds \\ \notag
&\leq C(1+t)^{-1} \int_{0}^{t}(1+s)^{-\frac 4 3}ds \\ \notag
&\leq C(1+t)^{-1}.
\end{align*}
Then the proof of \eqref{ineq25} implies that
\begin{align}\label{ineq28}
\int_{S(t)}|\hat{u}(t,\xi)|^2 d\xi\leq C(1+t)^{-1}.
\end{align}
According to \eqref{ineq18} and \eqref{ineq28}, we deduce that
\begin{align}\label{ineq29}
&\frac d {dt} (\|u\|^2_{L^2}+\|\tilde{\psi}\|^2_{L^2(\mathcal{L}^2)})+\frac {3} {1+t}\| u\|^2_{L^2}
+\int_{\mathbb{R}^{2}}\int_{B}\psi_{\infty}|\nabla_{R}\frac{\tilde{\psi}}{\psi_{\infty}}|^{2}dRdx\leq \frac {3C} {(1+t)^2},
\end{align}
which implies that
\begin{align}\label{ineq30}
\|u\|_{L^2}+\|\tilde{\psi}\|_{L^2(\mathcal{L}^2)}\leq  C(1+t)^{-\frac 1 2}.
\end{align}
To improve the decay rate for $\|\tilde{\psi}\|_{L^2(\mathcal{L}^2)}$, we need to consider the $L^2$ decay rate for $\nabla u$. \\
We claim that
\begin{align}\label{ineq31}
\|\nabla u\|_{L^2}+\|\nabla\tilde{\psi}\|_{L^2(\mathcal{L}^2)}+\|\tilde{\psi}\|_{L^2(\mathcal{L}^2)}\leq  C(1+t)^{-1}.
\end{align}
Applying the operator $\nabla^{\alpha}$ with $|\alpha|=1$ to \eqref{eq1} and using standard energy estimation, we have
\begin{align*}
&\frac 1 2 \frac d {dt} (\|\nabla^{\alpha}u\|^2_{L^2}+\|\nabla^{\alpha}\tilde{\psi}\|^2_{L^2(\mathcal{L}^2)})+\|\nabla \nabla^{\alpha} u\|^2_{L^2}+\int_{\mathbb{R}^{2}}\int_{B}\psi_{\infty}|\nabla_{R}\frac{\nabla^{\alpha}\tilde{\psi}}{\psi_{\infty}}|^{2}dRdx  \\
&=\int_{\mathbb{R}^{2}}\int_{B}\nabla^{\alpha}\nabla_R\cdot(\nabla u R\tilde{\psi})\frac{\nabla^{\alpha}\tilde{\psi}}{\psi_{\infty}}dRdx
+\int_{\mathbb{R}^{2}}(\nabla^{\alpha}u\cdot\nabla)u\cdot\nabla^{\alpha}udx
+\int_{\mathbb{R}^{2}}\int_{B}(\nabla^{\alpha}u\cdot\nabla)\tilde{\psi}\nabla^{\alpha}\tilde{\psi}dRdx.
\end{align*}
Integrating by part and using Lemma \eqref{Lemma1}, Theorem \eqref{th1}, we get
\begin{align*}
&\int_{\mathbb{R}^{2}}\int_{B}\nabla^{\alpha}\nabla_R\cdot(\nabla u R\tilde{\psi})\frac{\nabla^{\alpha}\tilde{\psi}}{\psi_{\infty}}dRdx
=-\int_{\mathbb{R}^{2}}\int_{B}\nabla^{\alpha}(\nabla u R\tilde{\psi})\nabla_R\frac{\nabla^{\alpha}\tilde{\psi}}{\psi_{\infty}}dRdx \\
&\leq C(\|\nabla \nabla^{\alpha} u\|_{L^2}\|\tilde{\psi}\|_{L^{\infty}(\mathcal{L}^2)})
(\int_{\mathbb{R}^{2}}\int_{B}\psi_{\infty}|\nabla_{R}\frac{\nabla^{\alpha}\tilde{\psi}}{\psi_{\infty}}|^{2}dRdx)^{\frac 1 2} \\
&+\|\nabla u\|_{L^{\infty}}\|\nabla^{\alpha}\tilde{\psi}\|_{L^2(\mathcal{L}^2)})
(\int_{\mathbb{R}^{2}}\int_{B}\psi_{\infty}|\nabla_{R}\frac{\nabla^{\alpha}\tilde{\psi}}{\psi_{\infty}}|^{2}dRdx)^{\frac 1 2}) \\
&\leq C\ep_0 (\|\nabla \nabla^{\alpha} u\|^2_{L^2}+\int_{\mathbb{R}^{2}}\int_{B}\psi_{\infty}|\nabla_{R}\frac{\nabla^{\alpha}\tilde{\psi}}{\psi_{\infty}}|^{2}dRdx).
\end{align*}
Using Lemma \eqref{Lemma0} with $p=3$ and Theorem \eqref{th1}, we have
\begin{align*}
&\int_{\mathbb{R}^{d}}(\nabla^{\alpha}u\cdot\nabla)u\cdot\nabla^{\alpha}udx\leq C\|\nabla u\|^3_{L^3}\leq C \|\nabla u\|^2_{L^2} \|\nabla^2 u\|_{L^2}
\leq C\ep_0 \|\nabla^2 u\|^2_{L^2}.
\end{align*}
Using Lemma \eqref{Lemma0}, Lemma \eqref{Lemma1} and Theorem \eqref{th1}, we get
\begin{align*}
\int_{\mathbb{R}^{2}}\int_{B}(\nabla^{\alpha}u\cdot\nabla)\tilde{\psi}\nabla^{\alpha}\tilde{\psi}dRdx
&\leq \|\nabla\tilde{\psi}\|_{L^2(\mathcal{L}^2)}\|\nabla^{\alpha} u\|_{L^{\infty}}
\|\nabla^{\alpha}\tilde{\psi}\|^2_{L^2(\mathcal{L}^2)}  \\
&\leq C\ep_0 \int_{\mathbb{R}^{2}}\int_{B}\psi_{\infty}|\nabla_{R}\frac{\nabla\tilde{\psi}}{\psi_{\infty}}|^{2}dRdx.
\end{align*}
Summing up all the estimates for $|\alpha|=1$, we deduce that
\begin{align}\label{ineq32}
\frac d {dt} (\|\nabla u\|^2_{L^2}+\|\nabla\tilde{\psi}\|^2_{L^2(\mathcal{L}^2)})+\|\nabla^2 u\|^2_{L^2}
+\int_{\mathbb{R}^{2}}\int_{B}\psi_{\infty}|\nabla_{R}\frac{\nabla\tilde{\psi}}{\psi_{\infty}}|^{2}dRdx\leq 0,
\end{align}
which implies that
\begin{align}\label{ineq33}
\frac d {dt} (\|\nabla u\|^2_{L^2}+\|\nabla\tilde{\psi}\|^2_{L^2(\mathcal{L}^2)})+\frac {3} {1+t}\|\nabla u\|^2_{L^2}
+\int_{\mathbb{R}^{2}}\int_{B}\psi_{\infty}|\nabla_{R}\frac{\nabla\tilde{\psi}}{\psi_{\infty}}|^{2}dRdx\leq \frac {3} {1+t}\int_{S(t)}|\xi|^2 |\hat{u}(\xi)|^2 d\xi.
\end{align}
According to \eqref{ineq30}, we have
\begin{align*}
\frac {3} {1+t}\int_{S(t)}|\xi|^2 |\hat{u}(\xi)|^2 d\xi\leq 9 (1+t)^{-2}\|u\|^2_{L^2}\leq C (1+t)^{-3}.
\end{align*}
This together with \eqref{ineq33} ensures that
\begin{align}\label{ineq34}
\|\nabla u\|_{L^2}+\|\nabla\tilde{\psi}\|_{L^2(\mathcal{L}^2)}\leq C (1+t)^{-1}.
\end{align}
Using standard energy estimation to the second equation in \eqref{eq1}, we get
\begin{align*}
&\frac d {dt} \|\tilde{\psi}\|^2_{L^2(\mathcal{L}^2)}+2\int_{\mathbb{R}^{2}}\int_{B}\psi_{\infty}|\nabla_{R}\frac{\tilde{\psi}}{\psi_{\infty}}|^{2}dRdx  \\
&=2\int_{\mathbb{R}^{2}}\int_{B}\nabla_R\cdot(\nabla u R\tilde{\psi})\frac{\tilde{\psi}}{\psi_{\infty}}dRdx+2\int_{\mathbb{R}^{2}}\int_{B}\nabla u\cdot{R}\nabla_{R}\mathcal{U}\tilde{\psi}dRdx.
\end{align*}
Using Lemma \eqref{Lemma1}, Lemma \eqref{Lemma2} and Theorem \eqref{th1}, we have
\begin{align*}
\int_{\mathbb{R}^{2}}\int_{B}\nabla_R\cdot(\nabla u R\tilde{\psi})\frac{\tilde{\psi}}{\psi_{\infty}}dRdx
=\int_{\mathbb{R}^{2}}\int_{B}\psi_{\infty}\nabla u R\frac{\tilde{\psi}}{\psi_{\infty}}
\nabla_R\frac{\tilde{\psi}}{\psi_{\infty}}dRdx\leq C\ep_0 \int_{\mathbb{R}^{2}}\int_{B}\psi_{\infty}|\nabla_{R}\frac{\tilde{\psi}}{\psi_{\infty}}|^{2}dRdx
\end{align*}
and
\begin{align*}
\int_{\mathbb{R}^{2}}\int_{B}\nabla u\cdot{R}\nabla_{R}\mathcal{U}\tilde{\psi}dRdx
=\int_{\mathbb{R}^{2}}\nabla u\cdot\tau dx\leq \delta\int_{\mathbb{R}^{2}}\int_{B}\psi_{\infty}|\nabla_{R}\frac{\tilde{\psi}}{\psi_{\infty}}|^{2}dRdx+C_\delta \|\nabla u\|^2_{L^2}.
\end{align*}
Using Lemma \eqref{Lemma1}, we obtain
\begin{align*}
\frac d {dt} \|\tilde{\psi}\|^2_{L^2(\mathcal{L}^2)}+\|\tilde{\psi}\|^2_{L^2(\mathcal{L}^2)}\leq C_\delta \|\nabla u\|^2_{L^2}.
\end{align*}
This together with \eqref{ineq34} implies that
\begin{align*}
\|\tilde{\psi}\|^2_{L^2(\mathcal{L}^2)}
&\leq \|\tilde{\psi}_0\|^2_{L^2(\mathcal{L}^2)}e^{-t}+C\int_{0}^{t}e^{-(t-s)}\|\nabla u\|^2_{L^2}ds  \\
&\leq C(e^{-t}+\int_{0}^{t}e^{-(t-s)}(1+s)^{-2}ds)  \\
&\leq C(1+t)^{-2},
\end{align*}
which means that we prove the claim \eqref{ineq31}.
We thus complete the proof of Proposition \eqref{pro2}.
\end{proof}

By \eqref{ineq31}, Lemma \eqref{Lemma0} and Proposition \eqref{pro2}, we obtain the following corollary.
\begin{coro}
Under the assumption of Theorem \eqref{th2}, for $2\leq p<\infty$, we have
\begin{align}
\|u\|_{L^p}\leq C(1+t)^{-(1-\frac 1 p)},~~
\|\tilde{\psi}\|_{L^p(\mathcal{L}^2)}\leq C(1+t)^{-1}.
\end{align}
\end{coro}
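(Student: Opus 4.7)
The plan is to deduce the $L^p$ decay bounds entirely from the $L^2$ information on $u$, $\tilde\psi$ and their spatial gradients, using the two-dimensional Ladyzhenskaya/Gagliardo–Nirenberg inequality supplied by Lemma \ref{Lemma0}. The ingredients we already have in hand are: from Proposition \ref{pro2}, $\|u\|_{L^2}\lesssim(1+t)^{-1/2}$ and $\|\tilde\psi\|_{L^2(\mathcal{L}^2)}\lesssim(1+t)^{-1}$; and from the claim \eqref{ineq31} established inside that proof, $\|\nabla u\|_{L^2}+\|\nabla\tilde\psi\|_{L^2(\mathcal{L}^2)}\lesssim(1+t)^{-1}$. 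So no further PDE analysis is needed; the corollary is a purely functional-analytic consequence.

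For the velocity bound, I would apply Lemma \ref{Lemma0} with $d=2$ directly to $u$: for any $p\in[2,\infty)$,
\begin{equation*}
\|u\|_{L^p}\;\leq\;C\|u\|_{L^2}^{2/p}\|\nabla u\|_{L^2}^{(p-2)/p}\;\leq\;C(1+t)^{-\frac{1}{p}}(1+t)^{-\frac{p-2}{p}}\;=\;C(1+t)^{-(1-\frac{1}{p})},
\end{equation*}
which is exactly the first inequality of the corollary.

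For the polymeric density, the point is to reduce the mixed $L^p_x(\mathcal{L}^2)$ norm to a scalar $L^p_x$ norm of the function $g(x):=\|\tilde\psi(x,\cdot)\|_{\mathcal{L}^2}$. One checks that $g\in H^1(\mathbb{R}^2)$ with $\|g\|_{L^2}=\|\tilde\psi\|_{L^2(\mathcal{L}^2)}$ and, by differentiating under the integral sign in $R$, the pointwise bound $|\nabla g(x)|\leq \|\nabla\tilde\psi(x,\cdot)\|_{\mathcal{L}^2}$ so that $\|\nabla g\|_{L^2}\leq \|\nabla\tilde\psi\|_{L^2(\mathcal{L}^2)}$. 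Applying Lemma \ref{Lemma0} to $g$ then yields
\begin{equation*}
\|\tilde\psi\|_{L^p(\mathcal{L}^2)}=\|g\|_{L^p}\;\leq\;C\|g\|_{L^2}^{2/p}\|\nabla g\|_{L^2}^{(p-2)/p}\;\leq\;C(1+t)^{-\frac{2}{p}}(1+t)^{-\frac{p-2}{p}}\;=\;C(1+t)^{-1},
\end{equation*}
which is the second inequality.

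The only step requiring a little care is the differentiation-under-the-integral estimate giving $|\nabla g|\le \|\nabla\tilde\psi\|_{\mathcal{L}^2}$; this is essentially Minkowski's inequality applied to $\nabla_x\int_B\psi_\infty|\tilde\psi/\psi_\infty|^2 dR$, combined with the Cauchy–Schwarz bound $|\partial_{x_i}g|\le \|\partial_{x_i}\tilde\psi\|_{\mathcal{L}^2}$ obtained from $g^2=\|\tilde\psi\|_{\mathcal{L}^2}^2$. Once this is justified, the corollary is immediate, and no further obstacle arises.
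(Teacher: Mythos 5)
Your proposal is correct and follows exactly the route the paper intends: the paper states the corollary as an immediate consequence of Proposition \ref{pro2}, the claim \eqref{ineq31}, and the $d=2$ interpolation inequality of Lemma \ref{Lemma0}, which is precisely the computation you carry out. Your reduction of the mixed norm via $g(x)=\|\tilde\psi(x,\cdot)\|_{\mathcal{L}^2}$ with $|\nabla g|\le\|\nabla_x\tilde\psi\|_{\mathcal{L}^2}$ is the natural way to make the second estimate rigorous and is a welcome clarification of a step the paper leaves implicit.
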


\smallskip
\noindent\textbf{Acknowledgments} This work was
partially supported by the National Natural Science Foundation of China (No.11671407 and No.11701586), the Macao Science and Technology Development Fund (No. 098/2013/A3), and Guangdong Province of China Special Support Program (No. 8-2015),
and the key project of the Natural Science Foundation of Guangdong province (No. 2016A030311004).


\phantomsection
\addcontentsline{toc}{section}{\refname}
\bibliographystyle{abbrv} 
\bibliography{Feneref}

\begin{thebibliography}{10}

\bibitem{Bird1977}
R.~B. Bird, R.~C. Armstrong, and O.~Hassager.
\newblock {\em Dynamics of Polymeric Liquids}, volume~1.
\newblock Wiley, New York, 1977.

\bibitem{Busuioc}
A.~V. Busuioc, I.~S. Ciuperca, D.~Iftimie, and L.~I. Palade.
\newblock The {FENE} dumbbell polymer model: existence and uniqueness of
  solutions for the momentum balance equation.
\newblock {\em J. Dynam. Differential Equations}, 26(2):217--241, 2014.

\bibitem{Doi1988}
M.~Doi and S.~F. Edwards.
\newblock {\em The Theory of Polymer Dynamics}.
\newblock Oxford University Press, Oxford, 1988.

\bibitem{He2009}
L.~He and P.~Zhang.
\newblock {$L^2$} decay of solutions to a micro-macro model for polymeric
  fluids near equilibrium.
\newblock {\em SIAM J. Math. Anal.}, 40(5):1905--1922, 2008/09.

\bibitem{Jourdain}
B.~Jourdain, T.~Leli\`evre, and C.~Le~Bris.
\newblock Existence of solution for a micro-macro model of polymeric fluid: the
  {FENE} model.
\newblock {\em J. Funct. Anal.}, 209(1):162--193, 2004.

\bibitem{F.Lin}
F.~Lin, P.~Zhang, and Z.~Zhang.
\newblock On the global existence of smooth solution to the 2-{D} {FENE}
  dumbbell model.
\newblock {\em Comm. Math. Phys.}, 277(2):531--553, 2008.

\bibitem{Luo-Yin-NA}
W.~Luo and Z.~Yin.
\newblock Global existence and well-posedness for the {FENE} dumbbell model of
  polymeric flows.
\newblock {\em Nonlinear Anal. Real World Appl.}, 37:457--488, 2017.

\bibitem{Luo-Yin}
W.~Luo and Z.~Yin.
\newblock The {L}iouville {T}heorem and the {$L^2$} {D}ecay for the {FENE}
  {D}umbbell {M}odel of {P}olymeric {F}lows.
\newblock {\em Arch. Ration. Mech. Anal.}, 224(1):209--231, 2017.

\bibitem{Luo-Yin2}
W.~Luo and Z.~Yin.
\newblock The {$L^2$} decay for the 2{D} co-rotation {FENE} dumbbell model of
  polymeric flows.
\newblock {\em Adv. Math.}, 343:522--537, 2019.

\bibitem{Masmoudi2008}
N.~Masmoudi.
\newblock Well-posedness for the {FENE} dumbbell model of polymeric flows.
\newblock {\em Comm. Pure Appl. Math.}, 61(12):1685--1714, 2008.

\bibitem{Masmoudi2013}
N.~Masmoudi.
\newblock Global existence of weak solutions to the {FENE} dumbbell model of
  polymeric flows.
\newblock {\em Invent. Math.}, 191(2):427--500, 2013.

\bibitem{Renardy}
M.~Renardy.
\newblock An existence theorem for model equations resulting from kinetic
  theories of polymer solutions.
\newblock {\em SIAM J. Math. Anal.}, 22(2):313--327, 1991.

\bibitem{Schonbek1985}
M.~E. Schonbek.
\newblock {$L^2$} decay for weak solutions of the {N}avier-{S}tokes equations.
\newblock {\em Arch. Rational Mech. Anal.}, 88(3):209--222, 1985.

\bibitem{Schonbek1991}
M.~E. Schonbek.
\newblock Lower bounds of rates of decay for solutions to the {N}avier-{S}tokes
  equations.
\newblock {\em J. Amer. Math. Soc.}, 4(3):423--449, 1991.

\bibitem{Schonbek}
M.~E. Schonbek.
\newblock Existence and decay of polymeric flows.
\newblock {\em SIAM J. Math. Anal.}, 41(2):564--587, 2009.

\bibitem{Zhang-H}
H.~Zhang and P.~Zhang.
\newblock Local existence for the {FENE}-dumbbell model of polymeric fluids.
\newblock {\em Arch. Ration. Mech. Anal.}, 181(2):373--400, 2006.

\end{thebibliography}

\end{document}